\documentclass[12pt]{amsart}

\usepackage{amsmath}
\usepackage{amsfonts}
\usepackage{amssymb}
\usepackage{amsthm}

\usepackage{upgreek}
\usepackage{bigdelim}
\usepackage{multirow}
\usepackage{hhline}
\usepackage{enumerate}
\usepackage{comment}
\usepackage{amsthm,verbatim}
\usepackage{latexsym}
\usepackage{fancyhdr}

\newcommand{\Z}{\mathbb{Z}}
\newcommand{\N}{\mathbb{N}}
\newcommand{\R}{\mathbb{R}}

\renewcommand{\P}{\mathbb{P}}
\renewcommand{\epsilon}{\varepsilon}
\renewcommand{\Lambda}{\Uplambda}
\newcommand{\ve}{\varepsilon}
\newtheorem{theorem}{Theorem}[section]
\newtheorem{lemma}[theorem]{Lemma}

\newtheorem{cor}[theorem]{Corollary}
\newtheorem{prop}[theorem]{Proposition}

\newtheorem{definition}{Definition}

\newtheorem{remark}{Remark}
\renewcommand{\mod}[1]{{\ifmmode\text{\rm\ (mod~$#1$)}\else\discretionary{}{}{\hbox{ }}\rm(mod~$#1$)\fi}}

\frenchspacing

\textwidth=13.5cm
\textheight=23cm
\parindent=16pt
\oddsidemargin=-0.5cm
\evensidemargin=-0.5cm
\topmargin=-0.5cm

\begin{document}
\title{Ergodicity and Conservativity of products of infinite  transformations and their inverses}
\date{\today}
\author[Clancy]{Julien Clancy}
\address[Julien Clancy]{Yale University, New Haven, CT 06520, USA}
\email{julien.clancy@yale.edu}

\author[Friedberg]{Rina Friedberg}
\address[Rina Friedberg]{University of Chicago, Chicago, IL 60637, USA}
\email{rinafriedberg@uchicago.edu}

\author[Kasmalkar]{Indraneel Kasmalkar}
\address[Indraneel Kasmalkar]{University of California, Berkeley, Berkeley, CA 94720, USA}
\email{indraneelk@berkeley.edu}

\author[Loh]{Isaac Loh}
\address[Isaac Loh]{Williams College, Williamstown, MA 01267, USA}
\email{il2@williams.edu}

\author[P\u{a}durariu]{Tudor P\u{a}durariu}
\address[Tudor P\u{a}durariu]{University of California, Los Angeles, CA 90095-1555, US }
\email {tudor\_pad@yahoo.com}

\author[Silva]{Cesar E. Silva}
\address[Cesar E. Silva]{Department of Mathematics\\
     Williams College \\ Williamstown, MA 01267, USA}
\email{csilva@williams.edu}

\author[Vasudevan]{Sahana Vasudevan}
\address[Sahana Vasudevan]{Harvard University, Cambridge, MA 02138, USA}
\email{svasudevan@college.harvard.edu}

\subjclass[2010]{Primary 37A40; Secondary
37A05, 
37A50} 
\keywords{Infinite measure-preserving, ergodic, conservative, inverse transformation, rank-one}

\pagestyle{fancy}
\fancyhead{}
\fancyhead[CO]{Ergodicity of Products in Infinite Measure}
\fancyhead[CE]{Clancy, Friedberg, Kasmalkar, Loh, P\u{a}durariu, Silva, and Vasudevan}

\begin{abstract}
We construct a class of   rank-one infinite measure-preserving transformations  such  that for each transformation $T$ in the class,  the cartesian product $T\times T$ of the transformation with itself is ergodic, but the product $T\times T^{-1}$  of the transformation with its inverse is not ergodic. We also  prove  that the product of any  rank-one transformation  with its inverse  is conservative, while there are infinite measure-preserving conservative ergodic Markov shifts whose product with their inverse is not conservative.
\end{abstract}

\maketitle

\section{Introduction}

The notion of weak mixing for finite measure-preserving transformations has many equivalent characterizations. Several of these characterizations, however, do not remain equivalent in the infinite measure-preserving case. The first examples showing that some of the properties are different  in the  infinite measure case were given by  Kakutani and Parry  \cite{KaPa63}, who constructed, for each positive integer $k$,  an infinite measure-preserving Markov shift $T$ such that the $k$-fold cartesian product of $T$ with itself is ergodic but its $k+1$-fold product is not (such a transformation is said to have {\bf ergodic index $k$}). Later,  Adams, Friedman and Silva \cite{AdFrSi01} constructed a rank-one infinite measure-preserving transformation $T$ with {\bf infinite ergodic index}  (i.e., all  finite cartesian products with itself are ergodic) but such that $T\times T^2$ is not conservative, hence not ergodic. Bergelson then asked if there existed  an example of a transformation $T$ of infinite ergodic index  but such that  $T\times T^{-1}$ is not ergodic.  This question appears as Problem 10 in \cite{Da08}. For the history and other examples, the reader may refer to \cite{DaSi09}; more recently though, ergodic index $k$ transformations have been constructed in rank-one in \cite{AdSi14}.   In this paper we partially answer Bergelson's  question by constructing an infinite measure-preserving rank-one transformation $T$ such that $T\times T$ is ergodic, but $T\times T^{-1}$ is not ergodic (Theorem~\ref{T:ieinotcons}). We also prove that for all rank-one transformations $T$, the transformation $T^n\times T^{-n}$ is conservative (Theorem~\ref{T:invcons}) for all $n\neq 0$ (the main result is that $T\times T^{-1}$ is conservative, as it is known that composition powers of a conservative transformations are conservative, see e.g. \cite[Corollary 1.1.4]{Aa97}), while this is not the case in general (Corollary~\ref{C:tt-1notcons}). In this context we note that it was already known that  there exist rank-one transformations $T$ such that $T\times T$ is not conservative
\cite{AdFrSi97}. Also, whenever $T$ is a {\bf rigid} transformation (i.e., there is  an increasing  sequence $\{n_i\}$ such that the limit the measure of
$T^{n_i}(A)\bigtriangleup A$ tends to $0$ for all sets $A$ of finite measure) one can verify that $T\times T^{-1}$ is conservative, and as the class of rigid transformations is generic 
in the group of invertible infinite measure-preserving transformations of a Lebesgue space under the weak topology \cite{AgSi02}, it follows that the property of $T\times T^{-1}$ being conservative is a generic property; this fact also follows from Theorem~\ref{T:ieinotcons} and the fact that infinite measure-preserving rank-ones are generic \cite{BSSSW15}. As we show later, however, there are other transformations, in particular conservative ergodic Markov shifts, where the product $T\times T^{-1}$ is not conservative (Corollary~\ref{C:tt-1notcons}). A consequence of the properties of our rank-one  examples in Theorem~\ref{T:ieinotcons} is that these transformations are not isomorphic to their inverse. Also, it follows from Theorem~\ref{T:invcons} that if a rank-one transformation $T$ satisfies that $T\times T$ is not conservative, then $T$ is not isomorphic to its inverse.

The methods that we use are combinatorial and probabilistic in nature. Propositions \ref{P:kproduct} and \ref{P:arbproduct} use the notion of descendants, as introduced in  \cite{DGPS15}, to turn the dynamics of the rank-one system into combinatorial characterizations.

We let $(X, \mathcal{B}, \mu)$ denote a Lebesgue measurable subset of the real line with Borel measureable sets $\mathcal{B}$, and consider $T: X \to X$ an invertible measure-preserving  transformation; we are interested in the case when $X$ is of infinite measure.  The transformation  $T$ is {\bf ergodic} if whenever  $T^{-1}(A) = A$, then $\mu(A) = 0$ or $\mu(A^c) = 0$, and {\bf conservative} if $A\subset \bigcup_{n=1}^\infty T^{-n}(A)  \mod \mu$. $T$ is invertible and a standard proof shows that the Lebesgue measure on $\R$ is non atomic, so when $T$ is ergodic, it is conservative. 

We review rank-one cutting-and-stacking transformations. A \textbf{Rokhlin column} or {\bf column} $C$ is an ordered finite collection of pairwise disjoint intervals (called the $\mathbf{levels}$ of $C$) in $\mathbb{R}$, each of the same measure. We think of the levels in a column as being stacked on top of each other, so that the $(j + 1)$-st level is directly above the $j$-th level. Every column $C = \{I_j\}$ is associated with a natural column map $T_C$ sending each point in $I_j$ to the point directly above it in $I_{j+1}$ (note that $T_C$ is undefined on the top level of $C$). A $\mathbf{rank}$-$\mathbf{one}$ $\mathbf{cutting}$-$\mathbf{and}$-$\mathbf{stacking}$ construction for $T$ consists of a sequence of columns $C_n$ such that:

\begin{enumerate}
	\item The first column $C_0$ consists only of the unit interval.
	\item Each column $C_{n+1}$ is obtained from $C_n$ by cutting $C_n$ into $r_n \geq 2$ subcolumns of equal width, adding any number $s_{n,k}$ of new levels (called $\mathbf{spacers}$) above the $k$th subcolumn, $k \in \{0,r_n - 1\}$, and stacking every subcolumn under the subcolumn to its right. In this way, $C_{n+1}$ consists of $r_n$ copies of $C_n$, possibly separated by spacers.
	\item $X = \bigcup_n C_n$.
\end{enumerate}

\noindent
Observing that $T_{C_{n+1}}$ agrees with $T_{C_n}$ everywhere that $T_{C_n}$ is defined, we then take $T$ to be the pointwise limit of $T_{C_n}$ as $n \rightarrow \infty$. A transformation constructed with these cutting and stacking techniques is rank-one, and in practice we often refer to cutting and stacking transformations as rank-one transformations. For further details on this class of transformations, the reader may refer to \cite{Si08} and \cite{BSSSW15}.

Given any level $I$ from $C_m$ and any column $C_n$ of $T$ with $m \leq n$, we define the $\mathbf{descendants}$ of $I$ in $C_n$ to be the collection of levels in $C_n$ whose disjoint union is $I$. We denote this set by $D(I, n)$. By abuse of notation (and not to complicate the notation further), we will also use $D(I, n)$ to refer to the heights of the descendants of $I$ in $C_n$.

For $j \ge 0$, let $h_j$ denote the order of $C_j$, and write $h_{j,k} = h_j + s_{j,k}$. 
Suppose that $I$ is a level in $C_i$ of height $h(I)$, where the heights in the column are $0$-indexed. Then $I$ splits into $r_i$ levels in $C_{i+1}$ of heights
\begin{equation*}
	\{h(I)\} \bigcup \{h(I) + \sum_{k = 0}^i h_{j, k} \mid 0 \leq i < r_j - 1\}
\end{equation*}
Letting 
\begin{equation} \label{E:hjeqn}
H_j = \{0\} \bigcup \left\{\sum_{k = 0}^i h_{j, k} \mid 0 \leq i < r_j - 1\right\},
\end{equation}
it follows inductively that
\begin{equation}\label{E:descendants}
	D(I, n) = h(I) + H_i \oplus H_{i+1} \oplus \cdots \oplus H_{n - 1}
\end{equation}
we call the set $H_k$ the \textbf{height set} of $T$ at the $k^\text{th}$ stage. 

Instead of describing a rank-one transformation by cutting and spacer parameters, we can describe it by specifying its descendant sets. For instance, given $D([0, 1], n)$ for every $n$, we have complete information on the distribution of spacer levels and non-spacer levels (descendants of $I$) in $C_n$ below $\max D([0,1],n)$ for every $n \in \N$, which symbolically describes $T$ while bypassing the traditional cutting and stacking notation. On the other hand, if one wishes to construct a rank-one transformation, then one needs only to specify sets $H_k \subset \N$ for $k \ge 0$ and define $D([0, 1], n)$ as above, that is, $D([0, 1], n) = H_0 \oplus \ldots \oplus H_{n-1}$. The only compatibility restrictions that follow from (\ref{E:hjeqn}) are that $0 \in H_k$ for all $k$, and that any two elements of $H_k$ are at least $h_{k-1}$ apart, where $h_{k-1}$ is the height of column $C_{k-1}$.

\subsection{Acknowledgements}
This paper was based on research done by the Ergodic Theory groups of the 2012 and 2014 SMALL Undergraduate Research Project at Williams College. Support for this project was provided by the National Science Foundation REU Grant  
DMS-0850577 
and DMS - 1347804 and the Bronfman Science Center of Williams College.  We also thank the referee for comments that improved our exposition.

\section{Preliminaries}\label{S:prelim}

Throughout this paper, we will let $T^{(k)} = T \times \cdots \times T$, and $U = T \times T^{-1}$.  For positive measure sets $A,B \subset X$ and any $\ve \in (0,1)$, we take $A \subset_\ve B$ to mean that $\mu(A \cap B) > (1-\ve)\mu(A)$. The operator $| \cdot |$ denotes the order of any subset of the integers.

In this section, we develop some techniques whereby cutting and stacking transformations can be characterized by their integer properties. We find that many of the ergodic properties of products of rank-one transformations can be deduced from the heights of pieces of the base $I$ of column $C_i$ in subsequent columns $C_j$, $j > i$. These heights can be inferred from the descendant set $D(I,j)$. 
First, we have a standard lemma which provides a sufficient condition for the ergodicity of $T\times T$, when $T$ is a rank-one cutting and stacking transformation. This is similar to Lemma 2.4 from \cite{Danilenko2001}, but in the case of integer actions. 

We say that a map $\tau: A\rightarrow X$ belongs to the \textit{full grouppoid} of $T$ and write $\tau \in [[T]]$ if $\tau$ is one-to-one and $\tau(x) \in \big\{T^n x: \, n \in \Z\big\}$ for all $x\in A$.

\begin{lemma}\label{L:semiringce}
	Let $T$ be a rank-one measure-preserving transformation on measure space $Y$. Let $X = Y \times Y$, and $\mathcal{D}$ be the sufficient semiring of rectangles in $X$ of the form $R_1 \times R_2$, where $R_1$ and $R_2$ are levels of some column of $T$.
Then $T\times T$ is ergodic on $X$ if for every $A,B \in \mathcal{D}$ of the form $A = I \times I$ for $I$ the base of column $C_i$, $i\in \N$, and $B = I \times  T^{b} I$, where  $0\le b< h_i$, there exists a map $\tau \in [[T \times T]]$ satisfying:
\begin{align*}
&D(\tau) \subset A \text{ and }R(\tau) \subset B,\\
&\text{ and }\mu(D(\tau)) \ge \delta \mu(A),\\
&\text{ and }\frac{d\mu\circ \tau}{d\mu}(v) \ge\beta \text{ for all } v\in D(\gamma).
\end{align*}
where $\delta$ and $\beta$ are positive absolute constants, and $D$ and $R$ denote the domain and range of a map, respectively. 
\end{lemma}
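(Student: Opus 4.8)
The plan is to prove this density criterion in the standard way for such statements. Assuming the hypothesis, I would take a $(T\times T)$-invariant set $E$ with $\mu(E)>0$ and show $\mu(E^c)=0$; since $\mathcal D$ is a sufficient semiring it suffices to rule out the possibility that $E$ and $E^c$ both have positive measure. First I would fix a small $\ve>0$ (to be constrained later in terms of $\delta$ and $\beta$) and invoke the Lebesgue density theorem relative to the refining sequence of product columns $C_i\times C_i$: for all large $i$ there are product-levels $P\subset_\ve E$ and $Q\subset_\ve E^c$, and since all levels of a fixed column have equal measure we may arrange $\mu(P)=\mu(Q)$. The structural fact driving everything is that every element of $[[T\times T]]$ moves points along $(T\times T)$-orbits, so for any $\tau$ an invariant set satisfies $\tau(E\cap D(\tau))\subset E$ and $\tau(E^c\cap D(\tau))\subset E^c$ up to null sets; moreover, since such a $\tau$ is one-to-one and piecewise a power of the measure-preserving map $T\times T$, it is itself measure preserving, so the bound $\frac{d\mu\circ\tau}{d\mu}\ge\beta$ holds with $\beta=1$ and serves to propagate measure lower bounds through images.

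Next I would normalize $P$ and $Q$ to the special form appearing in the hypothesis. Writing the levels of $C_i$ as $T^aI$ for $0\le a<h_i$, a product-level has the shape $T^{p_1}I\times T^{p_2}I$, and applying the measure-preserving groupoid translation $(T\times T)^{-p_1}=T^{-p_1}\times T^{-p_1}$ carries it onto $I\times T^{p_2-p_1}I$ while, by invariance, preserving its $\ve$-density in $E$ (resp.\ $E^c$). After this reduction I have rectangles $I\times T^{b_P}I\subset_\ve E$ and $I\times T^{b_Q}I\subset_\ve E^c$ that share the first coordinate $I$, the base of $C_i$; a little care is needed to keep the shift index in the range $0\le b<h_i$, handling by symmetry the case where the second coordinate index falls below the first.

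I would then connect these two rectangles through the diagonal $A=I\times I$, which is precisely the source rectangle the hypothesis controls. Applying the hypothesis to the pairs $(A,\,I\times T^{b_P}I)$ and $(A,\,I\times T^{b_Q}I)$ produces $\tau_{b_P},\tau_{b_Q}\in[[T\times T]]$, each moving a $\delta$-fraction of $A$ into its target. The composite $\tau_{b_Q}\circ\tau_{b_P}^{-1}$, restricted to where it is defined, is again a groupoid element sending a subset of $I\times T^{b_P}I$ into $I\times T^{b_Q}I$. Because $I\times T^{b_P}I\subset_\ve E$, the part of this composite's domain lying in $E$ has measure at least a definite fraction of $\mu(A)$ minus an $\ve$-error, and by invariance its image lies in $E$; but that image also lies in $I\times T^{b_Q}I\subset_\ve E^c$, whose $E$-content is below $\ve\mu(A)$. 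Choosing $\ve$ small relative to the fraction yields a contradiction, forcing $\mu(E^c)=0$ and hence ergodicity of $T\times T$.

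The step I expect to be the main obstacle is guaranteeing that the composite $\tau_{b_Q}\circ\tau_{b_P}^{-1}$ still transports a \emph{definite} fraction: this requires the two $\delta$-fractions $D(\tau_{b_P})$ and $D(\tau_{b_Q})$ of the diagonal $A$ to overlap in measure at least $(2\delta-1)\mu(A)$, which is positive only when $\delta>\tfrac12$. To accommodate an arbitrary positive $\delta$ I would interpose a boosting/exhaustion argument, iterating the hypothesis on the successively uncovered sub-rectangles of $A$ so as to reach a fraction arbitrarily close to $1$ before composing, after which the required overlap is unavoidable. The bookkeeping there—remaining within the semiring, keeping the maps injective after restriction, and controlling the accumulated $\ve$-errors—is the technical heart of the argument, whereas the remaining ingredients (Lebesgue density, invariance of $E$ under groupoid maps, and measure preservation of the translations) are routine.
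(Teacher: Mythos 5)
Your framing (reduce to an invariant set $E$ with $\mu(E),\mu(E^c)>0$, find levels $P\subset_\ve E$, $Q\subset_\ve E^c$, normalize by diagonal translations, use that groupoid elements preserve $\mu$ so one may take $\beta=1$) is sound, but the core step fails, and the failure is not repairable by the fix you sketch. The composition $\tau_{b_Q}\circ\tau_{b_P}^{-1}$ is defined only on $\tau_{b_P}\bigl(D(\tau_{b_P})\cap D(\tau_{b_Q})\bigr)$, and when $\delta\le\tfrac12$ the hypothesis permits $D(\tau_{b_P})$ and $D(\tau_{b_Q})$ to be disjoint: it is a bare existence statement with no control over \emph{which} $\delta$-fraction of $A$ is covered. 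You acknowledge this, but the proposed boosting/exhaustion cannot be run with the stated hypothesis, because the hypothesis applies only to sets of the special form $A=I\times I$ with $I$ the base of some column and $B=I\times T^bI$; the uncovered remainder $A\setminus D(\tau)$ is an arbitrary measurable subset, not of that form, so the hypothesis cannot be re-applied to it. Passing to a finer column $C_j$ does not help either: at stage $j$ the hypothesis only produces maps whose domains lie in the single diagonal rectangle $J\times J$, with no way to force them to land in the previously uncovered region. Since the lemma asserts the conclusion for \emph{every} positive constant $\delta$, your argument as written proves it only under the additional assumption $\delta>\tfrac12$.

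The paper's proof avoids composing two hypothesis maps altogether, and this is the missing idea. Given positive-measure sets $E,F$ (your invariant-set framing would work identically with $F=E^c$), it approximates them by rectangles $A',B'$ in $C_i\times C_i$, then applies the Double Approximation Lemma at a finer stage $j$, so that most $j$-subrectangles of $A'$ are nearly full of $E$ and most $j$-subrectangles of $B'$ nearly full of $F$. A counting argument over pairs in $D(I,j)^2\times D(I,j)^2$ (using that distinct descendants differ by at least $h_i$) then yields a nearly-$E$-full subrectangle $T^{a_0}J\times T^{a_1}J$ and a nearly-$F$-full subrectangle $T^{d_0}J\times T^{d_1}J$ with $d_0\ge a_0$ and $d_1\ge a_1$. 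Assuming, say, $d_1-a_1\ge d_0-a_0$, one single application of the hypothesis at stage $j$ with $b=(d_1-a_1)-(d_0-a_0)$ gives $\tau$ from $J\times J$ into $J\times T^bJ$, and the conjugated map $\gamma=\bigl(T^{d_0}\times T^{a_1+d_0-a_0}\bigr)\circ\tau\circ\bigl(T^{-a_0}\times T^{-a_1}\bigr)$ carries a $\delta$-fraction of the $E$-rectangle into the $F$-rectangle. The two translations are not separately diagonal, but their net displacement is $T^{d_0-a_0}\times T^{d_0-a_0}=(T\times T)^{d_0-a_0}$, so $\gamma$ still moves points along $T\times T$-orbits; hence $\mu\bigl((T\times T)^nE\cap F\bigr)>0$ for some $n$, which is the desired conclusion with no overlap requirement at all. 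This one-map mechanism, together with the positional counting argument (which your route through the diagonal let you skip), is precisely what your proposal is missing.
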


\begin{proof}
	Let $E$ and $F$ be two sets of positive measure in $X$. Because $\mathcal{D}$ is a sufficient semiring, we can find rectangles $A'$ and $B'$ in $\mathcal{D}$ such that $A'$ and $B'$ are more than $1-\frac{1}{32}$ full of $E$ and $F$, respectively. By taking $i$ sufficiently high, we can assume that $A'$ and $B'$ are rectangles in $C_i\times C_i$. So we have $A' = T^{a_0'} I \times T_{k-1}^{a_1'} I$ and $B' = T^{b_0'} I \times T^{b_1'} I$, where $a_0',a_1',b_0',b_1'\in \{0,...,h_i-1\}$. By the Double Approximation Lemma\footnote{For a proof of this well-known lemma, see \cite{BoFiMaSi01}.}, for some $j > i$, more than $1-\frac{1}{32}$ of the subrectangles of $A'$ and $B'$ in $C_j\times C_{j}$ must be more than $\frac{3}{4}$-full of $E$ and $F$, respectively. 
	
	We claim that we can find $j$-subrectangles with sides in $C_j$ which we denote $A\subset A'$ and $B\subset B'$ that are more than $\left(1-\frac{\beta \delta}{2}\right)$-full of $E$ and $F$ respectively, with the levels of sides of $A$ at or below the corresponding sides of $B$. Let $J$ be the base of $C_j$. Then all of the $j$-subrectangles $A\subset A'$  and $B\subset B'$ are of the form
\begin{align*}
A &= T^{a_0' + a_0''} J \times T^{a_1' + a_1''} J \\
B &= T^{b_0' + b_0''} J \times T^{b_1' + b_1''} B \\.
\end{align*}
where 
\[a_0'',a_1'',d_0'',d_1'' \in H_i \oplus \dots \oplus H_{j-1} = D(I,j).\]

Set $a_\ell = a_\ell' + a_\ell''$ and $d_\ell = d_\ell' + d_\ell''$ for $\ell = 0,1$. Recall that $\min\Big\{|x_1-x_2|:\, x_1,x_2\in D(I,j) \text{ and }x_1\ne x_2\Big\}\ge h_i$, so if $a_\ell''< d_\ell''$ for any $\ell = 0,1$, we should have $a_\ell < d_\ell$. For either setting of $\ell$, the total number of pairs $(a_\ell'',d_\ell'')$ such that $a_\ell'' < d_\ell''$ is bounded below by, say $\frac{1}{4}|D(I,j)|^2$, so the total number of elements in $D(I,j)^2 \times D(I,j)^2$ of the form $\Big((a_0,a_1),(d_0,d_1)\Big)$ which have $a_0 < d_0$ and $a_1< d_1$ is bounded below by $\frac{1}{16} |D(I,j)|^4$. Each pair denotes two $j$-subrectangles, one of $A$ and the other of $B$. But more than some fraction $1-\frac{1}{32}$ of the subrectangles of $A$ are more than $\left(1-\frac{\beta \delta}{2}\right)$-full of $E$, and the same is true with subrectangles of $B$ and $F$. So a fraction larger than $1-\frac{1}{16}$ of the possible pairs in $D(I,j)^2 \times D(I,j)^2$ denote in order a subrectangle of $A$ which is more than $\left(1-\frac{\beta \delta}{2}\right)$-full of $E$ and a subrectangle of $B$ which is more than $\left(1-\frac{\beta \delta}{2}\right)$-full of $F$. Hence, there is a subrectangle $B\subset B'$ which is more than $\left(1-\frac{\beta \delta}{2}\right)$-full of $F$ that has both of its sides (indexed by heights $d_0,d_1$) above the corresponding sides of a subrectangle $A$ which is more than $\left(1-\frac{\beta \delta}{2}\right)$-full of $E$. 

Suppose first that $d_0 - a_0 \le d_1 - a_1$. Then $d_1 - a_1 - d_0 + a_0 \in \{0,...,h_j-1\}$. Now define the map $\gamma$ by 
\[\gamma: = \Big( T^{d_0} \times T^{a_1 + d_0 - a_0}\Big) \circ \tau \circ \Big( T^{-a_0} \times T^{- a_1}\Big),\]
where $\tau \in [[T\times T]]$ is the map with $D(\tau) \subset J \times J$ and $R(\tau) \subset J \times T^{d_1-a_1 - d_0 + a_0} J$, and $\gamma$ is constructed around $\tau$ as a map from $A$ to $B$. By supposition, $\mu(D(\gamma)) \ge \delta\mu(A)$, so $\mu\big(D(\gamma) \cap E\big) > \left(\delta - \frac{\beta\delta}{2}\right) \, \mu(A) \ge \frac{\delta}{2} \, \mu(A)$. Thus, 
\[\mu\left(\gamma\big(D(\gamma) \cap E) \big) \cap B \cap F\right) > \beta \mu\big(D(\gamma) \cap E\big) - \frac{\beta \delta}{2}\, \mu(B) > \frac{\delta}{2} - \frac{\beta\delta}{2} \ge 0.\]
But note that 
\begin{align*}
\gamma(D(\gamma) \cap E) &\subset \bigcup_{n \in \Z} \Big( T^{d_0} \times T^{a_1 + d_0 - a_0}\Big) \circ \left(T\times T\right)^n \circ \Big( T^{-a_0} \times T^{- a_1}\Big) (D(\gamma) \cap E) \\
& = \bigcup_{n \in \Z} \Big(\big(T^{d_0 - a_0} \times T^{d_0 - a_0} \big) \circ \left(T\times T\right)^n \Big) (D(\gamma) \cap E) \\
& = \bigcup_{n \in \Z} \left(T\times T\right)^n (D(\gamma) \cap E).
\end{align*}
This set must have positive intersection with $F$, whence for some $n \in \Z$, we have $\mu\left((T\times T)^n E \cap  F\right) > 0$. A similar proof holds when $d_0 - a_0 \ge d_1 - a_1$. 
\end{proof}

The next lemma provides techniques to prove that more general transformations are not ergodic. 

\begin{lemma}\label{L:cedescendants} 
	Let $S:=T^{\alpha_0} \times  \cdots \times T^{\alpha_{k-1}}$ be a product of nonzero-integer powers of rank-one transformations in the product space $X:= Y \times \cdots \times Y$. If $T$ is conservative ergodic, for every $\epsilon > 0$, $i \in \N$, $I$ the base of $C_i$, and $(b_0,...,b_{k-1}) \in \{0,...,h_i - 1\}^k$, there exists a natural number $j>i$ such that for at least $(1 - \ve)|D(I,j)|^k$ tuples of descendants $(a_0, ..., a_{k-1}) \in D(I, j)^k$ we have $a_\ell = d_\ell +b_\ell + \alpha_\ell n$ for $\ell = 0,...,k-1$ for some tuple $(d_0,...,d_{k-1}) \in D(I,j)^k$ and $n\in \Z\setminus \{0\}$. 
\end{lemma}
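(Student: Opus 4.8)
The plan is to strip the statement down to a purely additive assertion about the descendant set $D := D(I,j) = H_i \oplus \cdots \oplus H_{j-1}$ and then to feed that assertion from the measure-theoretic recurrence of $T$, exploiting that the descendant levels refine $I$. First I would reformulate the conclusion: writing $f(c) := |D \cap (D + c)|$ for the additive autocorrelation of $D$, a tuple $\vec a = (a_0,\dots,a_{k-1})$ is \emph{good} precisely when $\vec a \in A_n := \prod_{\ell=0}^{k-1}\bigl(D \cap (D + b_\ell + \alpha_\ell n)\bigr)$ for some $n \neq 0$, and $|A_n| = \prod_{\ell} f(b_\ell + \alpha_\ell n)$. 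Thus it suffices to show that for $j$ large the union $\bigcup_{n\neq 0} A_n$ omits at most an $\ve$-fraction of $D^k$, i.e.\ to bound the complement.

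The bridge to dynamics is the observation that, for each fixed $c$, the descendant levels $T^a J$ (with $J$ the base of $C_j$ and $a \in D$) partition $I$, so that $\mu(J)\,f(c)$ equals $\mu\bigl(I \cap T^c I\bigr)$ up to a top-of-tower error that vanishes as $j\to\infty$; since $\mu(J)|D| = \mu(I)$, this gives $f(c)/|D| \to \mu(I\cap T^cI)/\mu(I)$. Hence $|A_n|/|D|^k$ tends, for each fixed $n$, to the product of return-overlap fractions $\prod_\ell \mu(I \cap T^{b_\ell+\alpha_\ell n}I)/\mu(I)$, and the whole problem becomes the statement that the union over $n\neq 0$ of the ``joint return events'' exhausts $I^k$ up to measure $\ve\,\mu(I)^k$.

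To produce this I would argue in the product space directly. A point $\vec x \in I^k$ is resolved, at stage $j$, by the tuple $\vec a(\vec x)$ recording which descendant level of $I$ each coordinate occupies, and $\vec a(\vec x)$ is good as soon as some $S^{n}\vec x$ with $n\neq 0$ lands in $W := T^{b_0}I \times\cdots\times T^{b_{k-1}}I$ with the match witnessed inside the column $C_j$ — a condition that, by the refinement, is detected by the descendant data once $j$ is large enough to separate the finitely many relevant return times. So the $\ve$-bound follows once we know the set of $\vec x\in I^k$ whose $S$-orbit meets $W$ has full measure in $I^k$; transferring this full-measure statement through the map $\vec a(\cdot)$, using that every descendant rectangle carries equal mass $\mu(J)^k$, yields $\bigl|\bigcup_{n\neq 0} A_n\bigr| \ge (1-\ve)|D|^k$ for a suitable $j$.

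The real difficulty, and the only place the hypotheses do genuine work, is establishing that almost every $\vec x\in I^k$ returns to $W$ under a \emph{common} power $S^n$ — equivalently that the joint correlation series $\sum_{n\neq0}\prod_\ell \mu(I\cap T^{b_\ell+\alpha_\ell n}I)$ diverges. For a single coordinate this is immediate from conservativity of $T$, since the sum $\sum_n \mathbf{1}_I\circ T^n$ is a.e.\ infinite on $I$; but the force of the lemma is the shared index $n$ across all $k$ coordinates with their distinct exponents $\alpha_\ell$, so extracting one recurrence time that serves every coordinate is a product-recurrence statement in which conservativity and ergodicity of $T$ together with the rank-one structure must both be used (this is the general-$\vec\alpha$ analogue of the divergence $\sum_n \mu(I\cap T^nI)^2 = \infty$ underlying Theorem~\ref{T:invcons}). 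Two quantitative points also need care: passing from the fixed-$c$ overlap limit to the full sum demands uniform control as $c = b_\ell+\alpha_\ell n$ grows with $n$ while $j\to\infty$, and upgrading a positive fraction of good tuples to the sharp $(1-\ve)$ fraction requires the a.e.\ (rather than merely positive-measure) recurrence, which I would obtain from the Hopf ratio ergodic theorem applied to $T$.
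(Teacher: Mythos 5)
Your reduction of the lemma to the combinatorial covering claim $\bigl|\bigcup_{n\ne 0} A_n\bigr| \ge (1-\ve)|D|^k$, and your translation between descendant tuples and points of $I^k$ lying in $S^n W$ (with $W = T^{b_0}I \times \cdots \times T^{b_{k-1}}I$), matches the paper's strategy. But the step you yourself flag as ``the real difficulty'' --- that almost every $\vec x \in I^k$ visits $W$ under some common power $S^n$, $n \ne 0$ --- is a genuine gap, and it cannot be filled from the hypothesis you are working with. You have read the statement literally as assuming only that $T$ is conservative ergodic, and are therefore trying to derive joint recurrence of the product $S = T^{\alpha_0}\times\cdots\times T^{\alpha_{k-1}}$ from properties of the single transformation $T$. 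That implication is false: every rank-one transformation is conservative ergodic, yet Theorem~\ref{T:ieinotcons} of this very paper constructs a rank-one $T$ for which the conclusion of the lemma fails for $S = T\times T^{-1}$ and $(b_0,b_1)=(0,1)$ --- indeed, the failure of exactly this conclusion is how the paper proves that $T\times T^{-1}$ is not ergodic. So no amount of care with the Hopf ratio ergodic theorem or with divergence of $\sum_{n}\prod_\ell \mu(I\cap T^{b_\ell+\alpha_\ell n}I)$ can close that step; the recurrence statement you defer is not a theorem under your hypotheses.

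The resolution is that the hypothesis the paper actually uses (and restates in Proposition~\ref{P:arbproduct}) is that the product $S$ itself is conservative ergodic; the ``$T$'' in the statement is a slip. Under that hypothesis your ``difficulty'' is immediate: conservative ergodicity of $S$ gives $A \subset \bigcup_{n\ne 0} S^n B \mod{\mu}$ for $A = I\times\cdots\times I$ and $B = W$, and continuity of measure yields a finite $m$ with $A$ covered up to $\tfrac{\ve}{2}\mu(A)$ by $\bigcup_{0<|n|\le m} S^n B$. The rest of your outline --- choosing $j$ large so that, outside a vanishing fraction of edge levels, each $S^n$ with $|n|\le m$ maps descendant rectangles of $B$ to rectangles of $(C_j)^k$, and then using pairwise disjointness of those rectangles to force $a_\ell = d_\ell + b_\ell + \alpha_\ell n$ for all $\ell$ --- is then essentially the paper's proof, and the uniformity-in-$n$ issue you worry about disappears, because $n$ ranges over the fixed finite set $\{-m,\dots,m\}\setminus\{0\}$ chosen \emph{before} $j$; your autocorrelation limits $f(c)/|D| \to \mu(I\cap T^c I)/\mu(I)$ are then not needed at all.
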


\begin{proof}
Fix any $\ve>0$. First, if $S$ is conservative ergodic, then we must be able to find a natural number $m$ such that $A$ is covered by $\bigcup_{n= -m, \, n \ne 0}^m S^n B$ up to a measure $\frac{\ve}{2}\, \mu(A)$. 

Recall that for any $j \ge i$, all of the rectangles in $\left(C_j\right)^k$ are pairwise disjoint, and $A$ and $B$ are the disjoint unions of such rectangles. In addition, for any $j$-subrectangle $C$ of $B$ of the form $C=T^{c_0 + b_0}J \times \cdots \times T^{c_{k-1} + b_{k-1}} J$ (where $J$ is the base of $C_{j}$), if $c_\ell \in D (I,j)$, $c_\ell \ge m|\alpha_\ell| + |b_\ell|$ and $c_\ell < h_{j} - m|\alpha_\ell| - |b_\ell|$ for all $\ell$, then $S^n(C),\, |n| \le m$ is also a rectangle in $\left(C_j\right)^k$. But the proportion of such rectangles $C$ grows arbitrarily high in $j$. Thus, we can choose $j$ large enough such that $\bigcup_{n= -m, \, n \ne 0} S^n B$ is composed up to a measure $\frac{\ve}{2}\, \mu(A)$ by a union of rectangles that are elements of $\left(C_j\right)^k$ and fall entirely inside of $\bigcup_{n= -m, \, n \ne 0} S^n B$. Specifically, we use rectangles of the form 
\begin{equation} \label{E:cedescendants1}
T^{d_0+ b_0+\alpha_0 n} J \times T^{d_1 + b_1 + \alpha_1 n} J \times \cdots \times T^{d_{k-1} + b_{k-1} + \alpha_{k-1}n} J
\end{equation}
for $|n|\le m$, $n \ne 0$, where $J$ is the base of $C_{j}$ and $(d_0,...,d_{k-1}) \in  D(I,j)^k$. Then by supposition these rectangles must cover $A$ up to a measure $\ve \mu(A)$. But because the rectangles of $\left(C_j\right)^k$ are pairwise disjoint, a $j$-rectangle in $\bigcup_{n= -m, \, n \ne 0} S^n B$ intersects with $A$ only if it equals a subrectangle of $A$. We proceed to denote the covered subrectangles of $A$ by $k$-tuples $(a_0,...,a_{k-1}) \in D(I,j)^k$, and note that such subrectangles must equal rectangles of the form given in (\ref{E:cedescendants1}). 

This implies that, for at least $(1-\ve)|D(I,j)|^k$ of the $k$-tuples $(a_0,...,a_{k-1}) \in D(I,j)^k$, we must have the relation
\begin{align}
T^{a_0} J \times \cdots \times T^{a_{k-1}} J = T^{d_0+ b_0 + \alpha_0 n} J  \times \cdots \times T^{d_{k-1} + b_{k-1} + \alpha_{k-1}n} J \label{E:situation}
\end{align}
for some nonzero $n, \, |n|\le m$, and some $(d_0,...,d_{k-1}) \in D(I,j)^k$ such that $d_\ell + b_\ell + \alpha_\ell n,\, |n| \le m$ is defined as a level in $C_j$. This can only happen if we have $a_\ell = d_\ell + b_\ell +\alpha_\ell n$ for all $\ell = 0,...,k-1$. 

\end{proof}

These lemmas now yield the following propositions, which provide necessary conditions and stronger sufficient conditions for the ergodicity of a transformation $T$ which is a product of powers of a rank-one transformation $T$.

\begin{prop} \label{P:kproduct} For a rank-one transformation $T$,  $T\times T$ is conservative ergodic if for every  $\ve>0$, $i\in \N$, $I$ the base of $C_i$, and $0 \le b < h_i-1$, there is a natural number $j>i$ such that at least $(1-\ve)|D(I,j)|^k$ $k$-tuples of descendants of the base $I$ of column $C_i$ of the form $(a,a') \in D(I,j)^k$ have uniquely corresponding pairs $(d,d') \in D(I,j)^k$ such that $a-d = a'-d' - b$.
\end{prop}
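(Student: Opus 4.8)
The plan is to derive the conclusion from Lemma~\ref{L:semiringce}: it suffices to produce, for every $A = I \times I$ with $I$ the base of $C_i$ and every $B = I \times T^b I$ with $0 \le b < h_i$, a map $\tau \in [[T\times T]]$ with $D(\tau) \subset A$, $R(\tau) \subset B$, $\mu(D(\tau)) \ge \delta\mu(A)$, and $\frac{d\mu\circ\tau}{d\mu} \ge \beta$ on its domain, for absolute constants $\delta,\beta>0$. The first step is to pass to the descendant picture. Fix $j>i$ and let $J$ be the base of $C_j$; then $I = \bigcup_{a \in D(I,j)} T^a J$ is a disjoint union of levels of equal measure $\mu(J)$, so $A$ is the disjoint union of the $|D(I,j)|^2$ equal-measure rectangles $T^a J \times T^{a'} J$ indexed by $(a,a') \in D(I,j)^2$, and likewise $B$ is the disjoint union of the rectangles $T^d J \times T^{d'+b} J$ indexed by $(d,d') \in D(I,j)^2$.

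The key observation is that the diagonal power $(T\times T)^n$ sends $T^a J \times T^{a'} J$ to $T^{a+n} J \times T^{a'+n} J$, which is the $B$-rectangle indexed by $(d,d')$ exactly when $d = a+n$ and $d'+b = a'+n$, i.e. when $n = d-a$ and $a - d = a' - d' - b$. Hence the relation in the hypothesis says precisely that the $A$-rectangle $(a,a')$ is carried into the $B$-rectangle $(d,d')$ by the groupoid element $(T\times T)^{d-a}$. Applying the hypothesis with, say, $\ve = \tfrac12$, I obtain a stage $j$ together with a one-to-one correspondence $(a,a') \mapsto (d,d')$ defined on a set $S \subset D(I,j)^2$ with $|S| \ge \tfrac12 |D(I,j)|^2$, and I define $\tau$ on $\bigcup_{(a,a')\in S} T^a J \times T^{a'} J$ by letting it act as $(T\times T)^{d-a}$ on the rectangle indexed by $(a,a')$. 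Since $\tau$ acts locally as a power of $T\times T$ it lies in $[[T\times T]]$ once it is one-to-one; since each source rectangle is carried by a measure-preserving translation onto a target rectangle of equal measure, $\frac{d\mu\circ\tau}{d\mu} \equiv 1$, so $\beta=1$; and since $D(\tau)$ is a disjoint union of $|S| \ge \tfrac12|D(I,j)|^2$ rectangles of measure $\mu(J)^2$, we get $\mu(D(\tau)) \ge \tfrac12|D(I,j)|^2\mu(J)^2 = \tfrac12\mu(A)$, so $\delta=\tfrac12$.

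The one genuine obstacle is the injectivity of $\tau$, which is equivalent to injectivity of the assignment $(a,a') \mapsto (d,d')$: two distinct source rectangles collide exactly when sent to the same $B$-rectangle. This is where the word ``uniquely'' in the hypothesis is essential and must be read as asserting that the correspondence is one-to-one, not merely well defined. Indeed, all sources assigned to a fixed target $(d,d')$ satisfy $a - a' = d - d' - b$, so they lie on a single diagonal of $D(I,j)^2$, which may contain as many as $|D(I,j)|$ points; a purely combinatorial extraction of an injective subfamily from a merely well-defined correspondence could therefore shrink $S$ to size $\sim |D(I,j)|$, yielding $\delta \sim 1/|D(I,j)| \to 0$, which is useless. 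With the one-to-one reading, the pieces $\{\tau|_{(a,a')}\}_{(a,a')\in S}$ have pairwise disjoint ranges, so $\tau$ is injective and $\tau \in [[T\times T]]$.

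With all three requirements verified, Lemma~\ref{L:semiringce} gives that $T\times T$ is ergodic. Finally, since the ambient product measure is $\sigma$-finite and non-atomic, ergodicity of a measure-preserving transformation forces conservativity (the dissipative part of its Hopf decomposition is invariant and would otherwise be a nonnull proper invariant set), so $T\times T$ is conservative ergodic, as claimed.
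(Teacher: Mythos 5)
Your proposal is correct and follows essentially the same route as the paper: both verify the hypotheses of Lemma~\ref{L:semiringce} by decomposing $A = I \times I$ and $B = I \times T^b I$ into descendant rectangles at stage $j$ and assembling $\tau$ as a disjoint union of maps carrying each $A$-rectangle $T^a J \times T^{a'} J$ onto the $B$-rectangle $T^d J \times T^{d'+b} J$ determined by the relation $a - d = a' - d' - b$. If anything, you are more careful than the paper at two points: you make $\tau$ act locally as the power $(T\times T)^{d-a}$ (the paper says ``any measure-preserving bijection,'' which on its own would not guarantee membership in $[[T\times T]]$), and you make explicit that the one-to-one reading of ``uniquely'' is what forces the target rectangles to be pairwise disjoint and hence $\tau$ to be injective.
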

\begin{proof}

Suppose that $T$ meets the stated condition, and fix an $i \in \N$. Let $A = I\times I$ and $B = I\times T^b I$ for $I$ the base of some column $C_i$ of $T$, and an integer $b \in \{0,...,h_i-1\}$. Fix some positive $\ve$---then by supposition, there exists some $j> i$ such that some fraction $1-\ve$ of the $|D(I,j)|^2$ of the constituent subrectangles of $A$ of the form $T^a J \times T^{a'} J,\, (a,a') \in D(I,j)^2$ can be associated with unique complementary descendant tuples $(d,d')$ with $a-d' = a' - d' - b$. Letting $n = d - a$, this implies that 
\[\left(T\times T\right)^n \left(T^{a} J \times T^{a'} J\right) = T^{d} J \times T^{d' + b} J \subset B.\]
Define $\tau_{(a,a')}$ to be any measure-preserving bijection taking points from $T^{a} J \times \cdots \times T^{a'} J$ to $T^{d} J \times T^{d' + b} J$. Let $F(I,j) \subset D(I,j)^2$ be the set of all pairs $(a,a')$ of descendants with such a unique complementary tuple $(d,d')$. Then we can define 
\[\tau = \bigsqcup_{(a,a') \in F(I,j)} \tau_{(a,a')},\]
in which case $\tau \in [[T\times T]]$, $D(\tau) \subset A$, and $R(\tau) \subset B$. Also, $\mu(D(\tau)) \ge (1-\ve)\, \mu(A)$ and $\tau$ is measure-preserving. Because $\ve$ can be taken arbitrarily small, lemma \ref{L:semiringce} (with any choice of $\beta, \delta \in (0,1)$) implies that $T\times T$ is ergodic.  
\end{proof}

Lemma \ref{L:cedescendants} also suggests a method of establishing non-ergodicity of products of powers of rank-one transformations. 

\begin{prop} \label{P:arbproduct} For $T$ a rank-one transformation and nonzero integers $\alpha_0,...,\alpha_{k-1}$, $S:=T^{\alpha_0} \times \cdots \times T^{\alpha_{k-1}}$ is conservative ergodic only if for every $\ve>0$, $i\in \N$, $I$ the base of $C_i$, and $k$-tuple $(b_0,...,b_{k-1}) \in \{0,...,h_{i}-1\}^k$, there is a natural number $j>i$ such that for at least $(1-\ve)|D(I,j)|^k$ $k$-tuples of descendants of the form $(a_0,...,a_{k-1}) \in D(I,j)^k$, we have corresponding $k$-tuples $(d_0,...,d_{k-1}) \in D(I,j)^k$ such that $\frac{a_0 - d_0 - b_0}{\alpha_0}= \frac{a_\ell - d_\ell - b_\ell}{\alpha_\ell} \in \Z\setminus \{0\}$ for each $\ell = 0,...,k-1$. $T$ is ergodic if this condition holds, and to every tuple $(a_0,...,a_{k-1})$ with a complementary $k$-tuple meeting the stated conditions, we can associate a unique such complementary tuple.  
\end{prop}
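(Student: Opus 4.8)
The statement splits into a necessity (``only if'') claim about $S$ and a sufficiency claim, and I would treat them separately. For necessity, the plan is to read the conclusion directly off Lemma~\ref{L:cedescendants}. Assume $S$ is conservative ergodic and fix $\ve>0$, $i\in\N$, $I$ the base of $C_i$, and $(b_0,\dots,b_{k-1})\in\{0,\dots,h_i-1\}^k$. Lemma~\ref{L:cedescendants} produces a $j>i$ such that for at least $(1-\ve)|D(I,j)|^k$ tuples $(a_0,\dots,a_{k-1})\in D(I,j)^k$ there are a complementary tuple $(d_0,\dots,d_{k-1})\in D(I,j)^k$ and an integer $n\neq 0$ with $a_\ell=d_\ell+b_\ell+\alpha_\ell n$ for every $\ell$. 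Rearranging each equality gives $\frac{a_\ell-d_\ell-b_\ell}{\alpha_\ell}=n$, and since this common index $n$ lies in $\Z\setminus\{0\}$, this is exactly the asserted descendant condition. Nothing beyond a rewriting of Lemma~\ref{L:cedescendants} is needed.

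For sufficiency I would imitate the proof of Proposition~\ref{P:kproduct}, the only genuinely new ingredient being a $k$-fold analogue of Lemma~\ref{L:semiringce} for $S=T^{\alpha_0}\times\cdots\times T^{\alpha_{k-1}}$. Concretely, I would first isolate the criterion: \emph{$S$ is ergodic provided that for every base $I$ of a column $C_i$ and every offset tuple $(b_0,\dots,b_{k-1})\in\{0,\dots,h_i-1\}^k$ there is $\tau\in[[S]]$ with $D(\tau)\subset I\times\cdots\times I$, $R(\tau)\subset T^{b_0}I\times\cdots\times T^{b_{k-1}}I$, and $\mu(D(\tau))\ge\delta\,\mu(I^k)$ for an absolute $\delta$.} Granting this, the descendant condition together with the uniqueness clause supplies such maps by the same bookkeeping as in Proposition~\ref{P:kproduct}: for $A=I^{k}$ and $B=T^{b_0}I\times\cdots\times T^{b_{k-1}}I$ set $\tau=\bigsqcup_{(a)}\tau_{(a)}$, where $(a)$ ranges over the guaranteed $(1-\ve)$-fraction of tuples and $\tau_{(a)}$ sends the subrectangle of $A$ indexed by $(a_0,\dots,a_{k-1})$ onto the one in $B$ indexed by its unique complement $(d_0,\dots,d_{k-1})$. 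The relation $a_\ell-\alpha_\ell n=d_\ell+b_\ell$ shows $\tau_{(a)}$ coincides with the single power $S^{-n}$, and uniqueness of the complement makes the images pairwise disjoint, so $\tau\in[[S]]$ with $\mu(D(\tau))\ge(1-\ve)\mu(A)$, which is the criterion with $\delta$ arbitrarily close to $1$.

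The real work is the generalized semiring lemma, and I would run it in parallel with Lemma~\ref{L:semiringce}. Given positive sets $E,F$, choose semiring rectangles nearly full of them and, by the Double Approximation Lemma, pass to a column $C_j$ in which almost all subrectangles are nearly full of $E$ or of $F$. Write a near-full-of-$E$ subrectangle as $P=T^{p_0}J\times\cdots\times T^{p_{k-1}}J$ and a near-full-of-$F$ one as $Q=T^{q_0}J\times\cdots\times T^{q_{k-1}}J$, with $J$ the base of $C_j$. The key selection step is to find such $P,Q$ with $p_\ell\le q_\ell$ in \emph{every} coordinate at once. Since a descendant pair with $a_\ell''<d_\ell''$ forces $p_\ell<q_\ell$ (distinct descendants differ by at least $h_i$, which dominates the base contribution $|a_\ell'-b_\ell'|\le h_i-1$), a counting argument like the one in Lemma~\ref{L:semiringce} produces a valid pair once the approximation fullness is taken close enough to $1$; here ``close enough'' must depend on $k$, because the coordinatewise-ordering event has probability about $2^{-k}$.

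Once $P$ lies below $Q$ coordinatewise, apply the criterion at level $j$ with the \emph{exact} offset $b_\ell=q_\ell-p_\ell\in\{0,\dots,h_j-1\}$ to obtain a base-to-translate $\tau\in[[S]]$, and conjugate by $C=T^{p_0}\times\cdots\times T^{p_{k-1}}$ to form $\gamma:=C\circ\tau\circ C^{-1}$. Because each $T^{p_\ell}$ commutes with the corresponding power $T^{\alpha_\ell n}$, every piece of $\gamma$ collapses to the very power of $S$ that the matching piece of $\tau$ was; thus $\gamma\in[[S]]$ has all pieces nonzero powers of $S$ and carries most of $P$ into $Q$. Then $\gamma(D(\gamma)\cap E)\subset\bigcup_{n\neq0}S^{n}(D(\gamma)\cap E)$ meets $F$ in positive measure, forcing $\mu(S^{n}E\cap F)>0$ for some $n\neq0$ and hence the ergodicity of $S$ (which in particular yields the ergodicity of $T$). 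I expect the main obstacle to be precisely this conjugation step: with distinct powers $\alpha_\ell$ one cannot, as in the $T\times T$ case, absorb a common displacement so as to force the first-coordinate offset to vanish. It is therefore essential that the descendant condition of Proposition~\ref{P:arbproduct} is stated for \emph{arbitrary} offset tuples $(b_0,\dots,b_{k-1})$, since this is exactly what permits the conjugating shift to be trivial and lets the commutation argument close.
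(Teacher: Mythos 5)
Your proposal is correct. For the necessity half --- the only half the paper proves in any detail --- your argument and the paper's coincide: both read the claim directly off Lemma~\ref{L:cedescendants} and rearrange $a_\ell = d_\ell + b_\ell + \alpha_\ell n$ into $\frac{a_\ell - d_\ell - b_\ell}{\alpha_\ell} = n \in \Z\setminus\{0\}$. The divergence is in the sufficiency half, which the paper disposes of in one sentence (``similar to the proof of Proposition~\ref{P:kproduct}, and is omitted''), while you actually carry it out --- and in doing so you identify exactly why ``similar'' is not ``verbatim.'' Lemma~\ref{L:semiringce} is stated and proved only for $T \times T$, and its proof exploits the diagonal structure: the conjugating maps are chosen so that the leftover displacement $T^{d_0 - a_0} \times T^{d_0 - a_0}$ is a \emph{common} power of $T$ in both coordinates, which can be absorbed into $\bigcup_{n}(T\times T)^n$. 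With distinct $\alpha_\ell$ that absorption is unavailable, and your substitute --- a groupoid criterion quantified over \emph{arbitrary} offset tuples, coordinatewise selection of $P$ below $Q$ (with fullness thresholds now depending on $k$), and conjugation by $C = T^{p_0} \times \cdots \times T^{p_{k-1}}$, which commutes with every power of $S$ so that each piece of $\gamma = C \circ \tau \circ C^{-1}$ is literally the same power $S^n$ as the corresponding piece of $\tau$ --- is precisely the right repair, and is arguably cleaner than the absorption trick even in the diagonal case. One caveat you inherit from the paper rather than introduce: ``uniqueness of the complement makes the images pairwise disjoint'' is slightly too quick, since uniqueness of $(d_0,\dots,d_{k-1})$ for each $(a_0,\dots,a_{k-1})$ does not by itself make the correspondence $(a_0,\dots,a_{k-1}) \mapsto (d_0,\dots,d_{k-1})$ injective, which is what disjointness of the images of the pieces $\tau_{(a)}$ actually requires; the paper's Proposition~\ref{P:kproduct} makes the identical elision, and in the constructions where these propositions are applied (negative mixed pairs matched to their corresponding positive mixed pairs) the correspondence is genuinely one-to-one, so nothing breaks.
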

\begin{proof}
By lemma \ref{L:cedescendants}, if $S$ is ergodic, for every $i \in \N$ and $\ve> 0$ we can find some $j > i$ such that $(1-\ve)|D(I,j)|^k$ $k$-tuples of descendants have complementary tuples $(d_0,...,d_{k-1}) \in D(I,j)^k$ satisfying $a_\ell = d_\ell + b_\ell + \alpha_\ell n$ for some $n \in \Z\setminus \{0\}$. This implies that $\frac{a_0 - d_0 - b_0}{\alpha_0} = \frac{\alpha_\ell - d_\ell - b_\ell}{\alpha_\ell } = n  \in \Z\setminus \{0\}$ for each $\ell$. The proof of the sufficiency of the second stated condition is similar to the proof of Proposition \ref{P:kproduct}, and is omitted.
\end{proof}



\section{Combinatorics} \label{S:comb}

We have shown that a rank-one cutting and stacking transformation can be characterized by its descendant sets, which encapsulate information about its cutting and stacking parameters. Descendant sets are just sum sets of height sets, which each correspond to cuts and spacers added to one particular column. The following lemma is used to construct the height sets $H_k$ for a rank-one transformation $T$ such that $T\times T$ is ergodic but $T\times T^{-1}$ is not. 

\begin{lemma}\label{L:comb}
	Let $M, \Gamma, \gamma \in \N$. Then there are sets of nonnegative integers $H(U), H(L)$, where $H(U) = \{\{V_1, W_1\}, \ldots, \{V_{\Gamma}, W_{\Gamma}\}\}$ and $H(L) = \{\{v_1,w_1\}, \ldots, \{v_{\gamma}, w_{\gamma}\} \}$, and letting \[H = \{ V_i, W_j, v_k, w_{\ell} \mid 1\le i, j\le \Gamma, 1\le k, \ell, \le \gamma \},\] $H$ satisfies the following properties:
	\begin{enumerate}
		\item For every $\{V,W\}\in H(U)$ and $\{v,w\} \in H(L)$ we have $V+W=v+w-1$
		\item If $x_1, x_2, x_3, x_4$ are in $H$ and $|x_1+x_2-x_3-x_3| < M$, then precisely one of the following holds:
		\begin{itemize}
			\item $\{x_1,x_2\} = \{x_3,x_4\}$
			\item $\{x_1,x_2\} \neq \{x_3,x_4\}$ but $x_1 + x_2 = x_3 + x_4$, in which case $\{x_1,x_2\}$ and $\{x_3,x_4\}$ are both in either $H(U)$ or $H(L)$,
			\item $x_1 + x_2 = x_3 + x_4 - 1$, in which case $\{x_1, x_2\}\in H(U)$ and $\{x_3, x_4\} \in H(L)$ , or
			\item $x_1 + x_2 = x_3 + x_4 + 1$, in which case $\{x_1, x_2\}\in H(L)$ and $\{x_3, x_4\} \in H(U)$ .
		\end{itemize}
	\end{enumerate}
\end{lemma}

\begin{proof}
	We proceed by finding a set $H$ such that $V_r+W_r=v_s+w_s$ for all $r\in \{1,\dots,\Gamma\}$ and $s\in \{1,\dots,\gamma\}$, and such that $|x_1+x_2-x_3-x_4|<1$ with distinct summands implies $\left\lbrace x_1,x_2, x_3,x_4\right\rbrace$ is one of $\{v_s,w_s,v_{s'},w_{s'}\}$, $\{v_s,w_s,V_r,W_r\}$, or $\{V_r,W_r,V_{r'},W_{r'}\}$. For this construction of $H$ when $M=1$, choose $n\gg 2^{2(\Gamma+\gamma)}$ and even, and let 
	\[H:=\Big\lbrace 2, \dots, 2^{\Gamma+\gamma}, n-2^{\Gamma+\gamma}, \dots , n-2\Big\rbrace,\]
	 where $H(U) = \{\{2, n-2\}, \dots, \{2^{\Gamma}, n-2^{\Gamma}\}\}$ and $H(L) = \{ \{2^{\Gamma+1}, n-2^{\Gamma+1}\}, \dots, \{2^{\Gamma+\gamma}, n-2^{\Gamma+\gamma}\}\}$. For $r\in \{1, \dots,\Gamma\}$ let $V_r=2^r$, $W_r=n-2^r$ and for $s\in \{1,...,\gamma\}$ let $v_s=2^{\Gamma+s}$ and $w_s=n-2^{\Gamma+s}$.

	Now, partition $H$ into sets $R_1=\{2, \dots, 2^{\Gamma+\gamma}\}$ and $R_2=\{n-2^{\Gamma+\gamma}, \dots, n-2\}$. Note that, given four elements $x_1,x_2,x_3,x_4\in H$ with $|x_1+x_2-x_3-x_4|<M=1$, we have $x_1+x_2=x_3+x_4$. Suppose that $x_1,x_2\in R_1$: that is, $x_1=2^{z_1}$ and $x_2=2^{z_2}$, for integers $0\le z_1,z_2\le \Gamma+\gamma$. Then $x_1+x_2 \le 2^{\Gamma+\gamma+1} \ll n-2^{\Gamma+\gamma}$, so $x_3$ and $x_4$ are also both in $R_1$. By unique binary expansion, either $z_1=z_3$ and $z_2=z_4$ or $z_1=z_4$ and $z_2=z_3$. Then $\{x_1,x_2\}=\{x_3,x_4\}$, so we obtain the first subcase above. Suppose that $x_1\in R_1$, $x_2\in R_2$. Then $x_3$ and $x_4$ are not both in $R_1$ and the size of $n$ dictates that precisely one of $\{x_3,x_4\}$ is in $R_1$. Without loss of generality write $x_1=2^{z_1},\, x_2=n-2^{z_2},\, x_3=2^{z_3},\text{ and } x_4=n-2^{z_4}$ where $z_1,z_2,z_3,z_4\in \{1,...,\Gamma+\gamma\}$. Then we obtain $2^{z_1}+2^{z_4}=2^{z_2}+2^{z_3}$, implying that either $z_1=z_2$ and $z_3=z_4$ or $z_1=z_3$ and $z_2=z_4$. In the former case $x_1,x_2$ are a pair $\{v,w\}$ or $\{V,W\}$ and $x_3,x_4$ also form such a pair; in the latter case because then $x_1=x_3$ and $x_2=x_4$, so $\{x_1,x_2\}=\{x_3,x_4\}$. Symmetry addresses the case where $x_1\in R_2$, $x_2\in R_1$. Finally, if $x_1,x_2\in R_2$ then both $x_3$ and $x_4$ are in $R_2$; setting $x_1=n-2^{z_1},\, x_2=n-2^{z_2},\, x_3=n-2^{z_3},$ and $x_4=n-2^{z_4}$, we see that $2^{z_1}+2^{z_2}=2^{z_3}+2^{z_4}$, which again implies the first subcase. Hence, $H$ conforms to its stated condition.

	Fix a $M\in \N$ with $M\ge 2$. Multiply every element in $H$ by $M$, and then subtract $1$ from all of the elements obtained from multiplying $M$ with a $V_r$. Call $V_r'=M\cdot V_r-1$, $W_r'=M\cdot W_r$, and so on. Call the set containing these new pairs $H'$. Suppose that $y_1,y_2,y_3,y_4$ are distinct elements in $H'$ with $|y_1+y_2-y_3-y_4|<M$. Let $x_1,x_2,x_3,x_4$ be their corresponding elements in $H$. By adding $1$ to all $y$ terms of the form $V_r'$, we obtain that $|Mx_1+Mx_2-Mx_3-Mx_4|<M+2$, whence $|x_1+x_2-x_3-x_4|<1+\frac{2}{M}\le 2$. So $|x_1+x_2-x_3-x_4|=0$ or $1$. But recall that $n$ was chosen to be even, so $|x_1+x_2-x_3-x_4|=0$. Thus, the pairs $\{x_1, x_2\}$ and $\{x_3, x_4\}$ are either both in $H(U)$ or $H(L)$ or are split evenly between them, which implies the same for $\{y_1,y_2\}$ and $\{y_3,y_4\}$ in $H(U)'$ and $H(L)'$. Hence, $H'$ is our desired set for any given $M$, when we let $H(U)'$ be the set of pairs $\{V_r',W_r'\}$ and $H(L)'$ be the set of pairs $\{v_s',w_s'\}$.
\end{proof}

\begin{remark}\label{R:construction}
Using Lemma \ref{L:comb}, we can construct the height sets $H_k$ of our transformation inductively. Specifically, let $M_k,\Gamma_k,\gamma_k\in \N$ be the inputs for set $H_k$, as implemented in Lemma \ref{L:comb}. Choose \[M_k \gg 2 \max D(I, k) = 2 \max (H_0 \oplus H_1 \oplus \cdots \oplus H_{k-1})\] 
This clearly ensures that the difference between any two elements in $H_k$ is greater than $h_{k-1}$. As of yet, let $\{\Gamma_k\}$ and $\{\gamma_k\}$ remain unspecified; we choose them towards the end of our construction.
\end{remark}

For reasons that will soon become clear, we need to categorize pairs in $H_k^2$ by their additive properties. Certain pairs $(a,a')$ drawn from $H_k^2$ will have complementary pairs $(d,d') \in H_k^2$ that satisfy $a-d = a' - d' -1$, and others will have complements $(d,d')$ satisfying $a + d = a' + d' + 1$. Our goal is to maximize the proportion of the former in order to make $T\times T$ ergodic, but minimize the proportion of the latter in order to keep $T\times T^{-1}$ from being ergodic. 

\begin{definition}
	Let $H$ be as in Lemma \ref{L:comb}. 
	A pair $\{x, y\} \in H\times H$ is called {\bf mixed} if $x=V_i$ or $W_j \in H(U)$, and $y = v_k$ or $w_{\ell} \in H(L)$, or vice-versa. 
	A mixed pair is called {\bf positive} if it is of the form $(w_j, W_i), (w_j, V_i), (v_j, V_i)$ or $(v_j, W_i)$. A pair is called {\bf negative} if it is of the form $(V_i, v_j), (V_i, w_j), (W_i, v_j)$ or $(W_i, w_j)$. A negative mixed pair will be said to \textbf{correspond} to a positive mixed pair $(d,d')$ if $a - d = a' - d' - 1$ (for instance, $(V_i,v_j)$ corresponds to $(w_j,W_i)$). Note that this correspondence is one-to-one for any negative mixed pair. 

	A pair $\{x, y\} \in H$ is called {\bf pure} if $\{x, y\} \in H(L)$ or $\{x, y\} \in H(U)$. Notice that the pure pairs are unordered, whereas the mixed pairs are ordered (and are positive or negative depending upon the order of the elements).
\end{definition}

The use of the words ``positive'' and ``negative'' is meant to be evocative. Let $a, a' \in D(I, j)$, and let $b$ be fixed. As in Theorem \ref{T:invcons}, we write $a = \sum_{k=i}^{j-1} a_k$ where $a_k \in H_k$. As established in the preceding lemmas, we are interested in necessary and sufficient conditions for, for instance, the existence of $d, d' \in D(I, j)$ such that $a - d = a' - d' - b$. If there are $b$ indices $k$ such that $\{a_k, a_k'\}$ is negative mixed, then we can satisfy this condition; choose $d_k, d_k'$ to be the corresponding positive mixed pair to get $a_k - d_k = a_k' - d_k' - 1$ for those $b$ indices, and for the remainder set $d_k = a_k$ and $d_k' = a_k'$. Crucially, given any such pair $(a,a')$ with $b$ indices having $(a_k,a_k')$ negative mixed, we can associate a unique pair $(d,d')$ satisfying $a-d = a'-d' - b$. There is a similar idea for dealing with the condition relating to $U$, that is, $a + a' = d + d' = b$.

\begin{lemma}\label{L:mixed}
	Let $n$ be fixed and let $M_k$ be the increasing sequence discussed in remark 1 with $M_0>1$. Let $I$ be the base level of $C_i$, where $i<n$, and suppose that $a + a' = d + d' + 1$, with $a, a', d, d' \in D(I, n)$. Write $a = \sum_{k = i}^{n-1} a_k$ with $a_k \in H_k$, and similarly for $d, a', d'$. Then there is a $k$ in $\{i, \dots, n-1\}$ such that $\{a_k, a_k'\} \in H_k(U)$ and $\{d_k, d_k'\} \in H_k(L)$, or vice-versa.
\end{lemma}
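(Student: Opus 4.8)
The plan is to collapse the relation $a+a'=d+d'+1$ into a single telescoping identity over the height sets and then pin down the one stage at which a genuine $\pm1$ discrepancy must occur. Since $I$ is the base of $C_i$ its height is $0$, so by (\ref{E:descendants}) each of $a,a',d,d'$ lies in the direct sum $H_i\oplus\cdots\oplus H_{n-1}$, and the enormous gaps imposed on the $H_k$ in Remark \ref{R:construction} make this a genuine direct sum, so that the expansions $a=\sum_{k=i}^{n-1}a_k$, etc., appearing in the hypothesis are well defined. Writing $\Delta_k:=a_k+a_k'-d_k-d_k'$, the hypothesis reads
\[
\sum_{k=i}^{n-1}\Delta_k=(a+a')-(d+d')=1 .
\]

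First I would locate the largest index at which the discrepancy survives. Because the total equals $1\neq0$, not every $\Delta_k$ can vanish; let $k^{*}$ be the maximal index with $\Delta_{k^{*}}\neq0$. Then $\Delta_k=0$ for $k>k^{*}$, so $\sum_{k=i}^{k^{*}}\Delta_k=1$ and hence $\Delta_{k^{*}}=1-\sum_{k=i}^{k^{*}-1}\Delta_k$. Since $a_k,a_k',d_k,d_k'\in H_k$ are nonnegative we have $|\Delta_k|\le 2\max H_k$, so $\left|\sum_{k=i}^{k^{*}-1}\Delta_k\right|\le 2\max(H_i\oplus\cdots\oplus H_{k^{*}-1})$, and therefore
\[
|\Delta_{k^{*}}|\le 1+2\max(H_0\oplus\cdots\oplus H_{k^{*}-1})<M_{k^{*}}
\]
by the choice $M_{k^{*}}\gg 2\max(H_0\oplus\cdots\oplus H_{k^{*}-1})$ in Remark \ref{R:construction}.

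This estimate is precisely the hypothesis required to invoke property (2) of Lemma \ref{L:comb} on the four elements $a_{k^{*}},a_{k^{*}}',d_{k^{*}},d_{k^{*}}'\in H_{k^{*}}$, whose signed difference is $\Delta_{k^{*}}$. That property forces $\Delta_{k^{*}}\in\{-1,0,1\}$, and as $\Delta_{k^{*}}\neq0$ we land in one of the two mixed subcases: either $\{a_{k^{*}},a_{k^{*}}'\}\in H_{k^{*}}(U)$ and $\{d_{k^{*}},d_{k^{*}}'\}\in H_{k^{*}}(L)$ (when $\Delta_{k^{*}}=-1$), or vice-versa (when $\Delta_{k^{*}}=+1$). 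Taking $k=k^{*}$ then yields the asserted index.

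The step I expect to be the crux is the size bound $|\Delta_{k^{*}}|<M_{k^{*}}$, since it is the only place the growth condition of the construction is used and it is exactly what makes Lemma \ref{L:comb} applicable; the conclusion of that lemma is vacuous without it. Everything else is bookkeeping---the telescoping of the expansions and the reading of the two $\pm1$ subcases of Lemma \ref{L:comb} as the $H(U)$/$H(L)$ dichotomy demanded here.
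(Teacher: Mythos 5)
Your proposal is correct and takes essentially the same approach as the paper: both arguments select the largest index $k$ at which $a_k+a_k'-d_k-d_k'\neq 0$, use the growth condition $M_k \gg 2\max D(I,k)$ from Remark~\ref{R:construction} to force $|a_k+a_k'-d_k-d_k'|<M_k$ (the paper rules out the alternative $\geq M_k$ by a telescoping contradiction, whereas you derive the same bound directly from the telescoped identity---logically the contrapositive of the identical estimate), and then conclude via the two mixed subcases of property (2) of Lemma~\ref{L:comb}.
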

\begin{proof}
	We clearly cannot have $a_k + a_k' = d_k + d_k'$ for each $k$, so choose the largest $k$ such that equality does not hold. Recall that $M_k$ is the constant used to construct $H_k$ in Lemma \ref{L:comb}, and was chosen to be $\gg 2 \max D(I, k)$ in Remark \ref{R:construction}. The first case is $|a_k + a_k' - d_k + d_k'| < M_k$. So, we have that $\{a_k, a_k'\}$ and $\{d_k, d_k'\}$ must be pairs in $H_k(U)$ and $H_k(L)$.  So we have $\{a_k, a_k'\} \in H_k(U)$ and $\{d_k d_k'\} \in H_k(L)$ or $\{a_k, a_k'\} \in H_k(L)$ and $\{d_k, d_k'\} \in H_k(U)$.

	The second case is when $|a_k + a_k' - d_k - d_k'| \ge M_k \gg 2 \max D(I, k)$. We have
	\begin{align*}
	|a + a' - d - d'| &= \left| \sum_{j = i}^{n-1} a_j + \sum_{j = i}^{n-1} d_j - \sum_{j = i}^{n-1} a_j' - \sum_{j = i}^{n-1} d_j' \right|\\
	&= \left| \sum_{j=i}^{k} (a_j + d_j - a_j' - d_j' ) \right|\\
	& \geq |a_k + d_k - a_k' - d_k'| -  \sum_{j = i}^{k-1} \left| (a_j + d_j - a_j' - d_j') \right|\\
	& \geq M_k - 2 \sum_{j=1}^{k-1} \max H_j \\
	& = M_k - 2 \max D(I, k) \gg 1,
	\end{align*}
	which contradicts the initial assumption, concluding the lemma.
\end{proof}


\section{For each rank-one  $T$ and $n \in \Z\setminus \{0\}$, $T^n\times T^{-n}$ is conservative}\label{S:ttinvcons}

We note that there exist rank-one transformations $T$ such that $T\times T$ is not conservative \cite{AdFrSi97}, as well as infinite measure-preserving transformations where $T\times T^{-1}$ is not conservative (Corollary~\ref{C:tt-1notcons}). The proof of  Lemma~\ref{L:krectangles} below follows from the proof of Proposition 8.1 in \cite{DJJSS10}.

\begin{lemma}\label{L:krectangles}
	Let $T$ be any infinite measure-preserving transformation on $X$ and $\mathcal{D}$ be a sufficient semiring in $X$. Suppose that $T$ satisfies the conservativity conditon on $\mathcal{D}$ that for every $A \in \mathcal{D}$ we have $A \subset \bigcup_{n \in \Z\setminus\{0\}} T^n A \mod{\mu}$. Then $T$ is conservative.
\end{lemma}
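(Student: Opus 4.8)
The plan is to argue by contradiction through the wandering-set (Hopf) characterization of conservativity. Recall that $T$ fails to be conservative precisely when there is a \emph{wandering set} $W$ of positive measure, i.e. a set for which the translates $T^n W$, $n \in \Z$, are pairwise disjoint; indeed, if some $A$ violates $A \subset \bigcup_{n\ge 1}T^{-n}A \mod{\mu}$, then $W := A \setminus \bigcup_{n\ge 1}T^{-n}A$ is such a set. Since $X$ is $\sigma$-finite, I may shrink $W$ so that $0 < \mu(W) < \infty$ (a subset of a wandering set is wandering). The goal is then to exhibit a single element $A \in \mathcal{D}$ that is almost entirely contained in $W$ and use it to contradict the hypothesis $A \subset \bigcup_{n\neq 0}T^n A \mod{\mu}$.

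First I would produce such an $A$. Fix a small $\delta \in (0, \tfrac12)$. Because $\mathcal{D}$ is a sufficient semiring and its elements have finite measure, $W$ can be approximated to within $\ve\,\mu(W)$ in measure by a finite disjoint union $B = \bigsqcup_i A_i$ of elements of $\mathcal{D}$. An averaging argument then yields a good piece: if every $A_i$ satisfied $\mu(A_i \setminus W) \ge \delta\,\mu(A_i)$, then $\mu(B \setminus W) = \sum_i \mu(A_i \setminus W) \ge \delta\,\mu(B) \ge \delta(1-\ve)\mu(W)$, which contradicts $\mu(B \bigtriangleup W) < \ve\,\mu(W)$ once $\ve$ is chosen small relative to $\delta$. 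Hence some $A := A_i \in \mathcal{D}$ has $\mu(A) > 0$ and $\mu(A \setminus W) < \delta\,\mu(A)$. Write $A = A' \sqcup E$ with $A' = A \cap W$ and $E = A \setminus W$, so that $\mu(E) < \delta\,\mu(A)$.

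Next I would estimate $\mu\bigl(A \cap \bigcup_{n\neq 0}T^n A\bigr)$. Distributing $A \cap T^n A = (A' \cup E) \cap (T^n A' \cup T^n E)$ and using that $A' \cap T^n A' \subset W \cap T^n W = \emptyset$ for $n \neq 0$, I get $A \cap T^n A \subset (A' \cap T^n E) \cup E$. The parts contained in $E$ contribute at most $\mu(E) < \delta\,\mu(A)$. The one delicate estimate is to bound the measure of the points of the error set $E$ that wander back into $W$, namely $\mu\bigl(W \cap \bigcup_{n\neq 0}T^n E\bigr)$. Here I would use measure-preservation to rewrite $\mu(W \cap T^n E) = \mu(T^{-n}W \cap E)$ and then exploit that the sets $T^{-n}W$ are pairwise disjoint, so that $\sum_{n\neq 0}\mu(T^{-n}W \cap E) = \mu\bigl(E \cap \bigcup_{n\neq 0}T^{-n}W\bigr) \le \mu(E)$. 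Combining the two bounds gives $\mu\bigl(A \cap \bigcup_{n\neq 0}T^n A\bigr) \le 2\mu(E) < 2\delta\,\mu(A) < \mu(A)$, contradicting the hypothesis that $A \subset \bigcup_{n\neq 0}T^n A \mod{\mu}$. Therefore no wandering set of positive measure exists, and $T$ is conservative.

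The main obstacle is exactly the recurrence estimate of the last step: a priori the translates $T^n E$ of the small error set could accumulate inside $W$ and account for all of $A'$, so merely knowing that $\mu(E)$ is small does not obviously control $\mu\bigl(W \cap \bigcup_{n\neq 0}T^n E\bigr)$. It is the wandering disjointness of $\{T^{-n}W\}_{n\neq 0}$, combined with measure-preservation, that converts the smallness of $\mu(E)$ into a genuinely small return measure; this is the crux on which the whole argument turns.
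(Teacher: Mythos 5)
Your proof is correct. There is nothing in the paper to compare it against line by line: the authors do not prove this lemma themselves, remarking only that it ``follows from the proof of Proposition 8.1 in \cite{DJJSS10}.'' Your argument is the standard self-contained one that such a citation points to: reduce failure of conservativity to a wandering set $W$ with $0<\mu(W)<\infty$; use sufficiency of $\mathcal{D}$ to find $A\in\mathcal{D}$ with $\mu(E)<\delta\,\mu(A)$ where $E=A\setminus W$; and then convert measure preservation plus disjointness of the translates of $W$ into the bound $\mu\bigl(A\cap\bigcup_{n\ne 0}T^nA\bigr)\le 2\mu(E)<\mu(A)$, contradicting the hypothesis. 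The details all check out: your set $W=A\setminus\bigcup_{n\ge 1}T^{-n}A$ is indeed wandering, and your (implicit) use of invertibility to upgrade disjointness of $\{T^{-n}W\}_{n\ge 0}$ to disjointness of $\{T^{n}W\}_{n\in\Z}$ is exactly what is needed, since the hypothesis is two-sided; likewise the key estimate $\sum_{n\ne 0}\mu(T^{-n}W\cap E)=\mu\bigl(E\cap\bigcup_{n\ne 0}T^{-n}W\bigr)\le\mu(E)$ is the correct way to control returns of the error set. One simplification is available to you: the operative meaning of ``sufficient semiring'' in this paper (see the proof of Lemma~\ref{L:semiringce}, where elements of $\mathcal{D}$ ``more than $1-\frac{1}{32}$ full'' of an arbitrary positive-measure set are produced) is precisely the fullness property, i.e., for every set $W$ of positive measure and every $\delta>0$ there exists $A\in\mathcal{D}$ with $\mu(A\cap W)>(1-\delta)\mu(A)$. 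So your detour through approximating $W$ by a finite disjoint union $B=\bigsqcup_i A_i$ and averaging over the pieces can be skipped: the element $A$ your argument needs is handed to you directly by the definition, and the rest of your proof proceeds unchanged.
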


This lemma has a very desirable equivalence between a property on our semiring and a property on all of our measure space. It allows us to use our descendant's notation to its fullest potential. Using Lemma \ref{L:krectangles}, we have the following equivalent condition to conservativity of products of rank-one transformations: 

\begin{prop} \label{L:conservprods2}
	Let $T$ be a rank-one transformation on a measure space $Y$ and $A = I \times \cdots \times I$ ($k$ times), where $I$ is the base of column $C_i$. Furthermore, let $(\alpha_0,...,\alpha_{k-1})$ be a $k$-tuple of nonzero integers. Set $\mathcal{D}$ to be the semiring of rectangles in $X:=Y\times \cdots \times Y$ ($k$ times) which have levels of columns of $T$ as sides. Then the product transformation $S:=T^{\alpha_0}\times \cdots \times T^{\alpha_{k-1}}$ on $X$ is conservative if and only if for every $\ve > 0$ there is $j$ such that at for at least $(1 - \ve)|D(I,j)|^k$ of the $k$-tuples $(a_0,...,a_{k-1}) \in D(I,j)^k$, there exist complementary $k$-tuples $(d_0,...,a_{k-1}) \in D(I,j)^k$ satisfying $\frac{a_0-d_0}{\alpha_0} = \frac{a_\ell- d_\ell}{\alpha_\ell} \in \Z\setminus \{0\}$ for $\ell = 1,...,k-1$. 
	\end{prop}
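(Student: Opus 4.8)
The plan is to use Lemma~\ref{L:krectangles}, which together with the elementary fact that a conservative measure-preserving transformation recurs on every set of positive finite measure shows that $S$ is conservative if and only if $A' \subset \bigcup_{n\neq 0} S^n A' \mod{\mu}$ for every rectangle $A'$ in the sufficient semiring $\mathcal{D}$. The task is then to translate this self-covering condition into the stated descendant combinatorics, exactly in the spirit of the proof of Lemma~\ref{L:cedescendants}.

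For the sufficiency (``if'') direction I would first reduce to rectangles all of whose sides are levels of a single column: any $A' \in \mathcal{D}$ is a finite disjoint union of such rectangles, and if each piece is self-covered mod $\mu$ then so is $A'$. So take $A' = T^{c_0} I \times \cdots \times T^{c_{k-1}} I$ with $I$ the base of $C_i$ and $0 \le c_\ell < h_i$. The key point is \emph{shift-independence}: for a $j$-subrectangle $R_a = T^{c_0 + a_0} J \times \cdots \times T^{c_{k-1}+a_{k-1}} J$ of $A'$ (with $a \in D(I,j)^k$, $J$ the base of $C_j$) and a complementary tuple $d \in D(I,j)^k$ with $a_\ell - d_\ell = \alpha_\ell n$ for a common $n \neq 0$, one has $S^n\big(T^{c_0 + d_0} J \times \cdots \times T^{c_{k-1}+d_{k-1}} J\big) = R_a$, and $T^{c_0 + d_0} J \times \cdots \times T^{c_{k-1}+d_{k-1}} J \subset A'$ because each $d_\ell \in D(I,j)$ is a genuine descendant. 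Hence $R_a \subset S^n A' \subset \bigcup_{m\neq 0} S^m A'$. The shifts $c_\ell$ cancel, so the family of covered subrectangles depends only on $D(I,j)$ and not on the chosen $A'$. Given $\ve > 0$, the hypothesis (applied at the column $C_i$) supplies $j$ for which at least $(1-\ve)|D(I,j)|^k$ tuples $a$ admit such a $d$, so $A'$ is covered up to measure $\ve\,\mu(A')$; letting $\ve \to 0$ and using that the covered set $A' \cap \bigcup_{m\neq 0} S^m A'$ is fixed gives $A' \subset \bigcup_{m\neq 0} S^m A' \mod{\mu}$. As $i$ ranges over all columns these rectangles form a sufficient semiring, so Lemma~\ref{L:krectangles} yields that $S$ is conservative.

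For the necessity (``only if'') direction I would start from $A \subset \bigcup_{n\neq 0} S^n A \mod{\mu}$ with $A = I \times \cdots \times I$. From here the argument is the specialization of the proof of Lemma~\ref{L:cedescendants} to $B = A$ and $b_0 = \cdots = b_{k-1} = 0$: choose $m$ with $\bigcup_{0<|n|\le m} S^n A$ covering $A$ up to $\tfrac{\ve}{2}\mu(A)$, then pick $j$ large enough that the proportion of ``interior'' subrectangles $R_d$ (those with $m|\alpha_\ell| \le d_\ell < h_j - m|\alpha_\ell|$ for all $\ell$) exceeds $1 - \tfrac{\ve}{2}$, so that for such $R_d$ and $|n| \le m$ the image $S^n R_d$ is again an honest $j$-subrectangle. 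Because distinct $j$-subrectangles are disjoint, a covered subrectangle $R_a$ of $A$ must in fact equal some $S^n R_d$, forcing $a_\ell = d_\ell + \alpha_\ell n$ with a common $n \neq 0$, i.e. $\tfrac{a_0 - d_0}{\alpha_0} = \cdots = \tfrac{a_{k-1} - d_{k-1}}{\alpha_{k-1}} \in \Z\setminus\{0\}$, for at least $(1-\ve)|D(I,j)|^k$ tuples $a$, which is exactly the stated condition.

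The routine heart of the argument is the boundary control in the necessity direction — guaranteeing that the finitely many powers $S^n$, $|n| \le m$, carry the bulk of the subrectangles of $A$ to genuine $j$-subrectangles rather than across column boundaries — but this is identical to what is already carried out in Lemma~\ref{L:cedescendants}. The one genuinely new ingredient is the shift-independence used for sufficiency: it is what lets a condition phrased only for the base rectangle $I \times \cdots \times I$ certify the self-covering of \emph{every} rectangle in the semiring, which is precisely what Lemma~\ref{L:krectangles} demands.
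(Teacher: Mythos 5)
Your proposal is correct and follows essentially the same route as the paper's proof: both directions hinge on Lemma~\ref{L:krectangles}, the necessity direction repeats the interior-subrectangle boundary control from Lemma~\ref{L:cedescendants}, and the sufficiency direction uses the same commutation idea (the paper proves covering for $I \times \cdots \times I$ and then conjugates by $T^{b_0} \times \cdots \times T^{b_{k-1}}$, which is exactly your ``shift-independence'' stated for general rectangles $T^{c_0}I \times \cdots \times T^{c_{k-1}}I$). The only difference is organizational, not mathematical.
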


\begin{proof}
Fix $\ve>0$. First, if $S$ is conservative, we can find some $m$ such that $A$ is covered by $\bigcup_{n=-m,n\ne 0}^m S^n A$ except for some measure $\frac{\ve}{2} \, \mu(A)$. Then we may choose $j$ large enough such that, up to measure $\frac{\ve}{2} \, \mu(A)$, all of the intersections $S^n A \cap A$ for $n,\, |n| \le m$ are composed of unions of rectangles in $\left(C_j\right)^k$ for $\ell = 0,...,k-1$ (see the proof of Lemma \ref{L:cedescendants}). These subrectangles are descendant rectangles of $A$ (that is, their sides are descendants of the original sides of $A$) whose sides have heights indexed by elements in $D(I,j)^k$. Thus, out of a total of $|D(I,j)|^k$ subrectangles at the $j^\text{th}$ stage, at least $(1-\ve)|D(I,j)|^k$ are contained in $S^n A \cap A$ for some $n \ne 0$. This implies that an equal number of $k$-tuples $(a_0,...,a_{k-1})\in D(I,j)^k$ will satisfy $T^{a_0} J \times \cdots \times T^{a_{k-1}} J \subset S^n A$ for some $n\ne  0$ with $|n| \le m$. For these rectangles, this can only happen if $a_\ell = d_\ell + \alpha_\ell n$ for some $n \in \Z$ and all $\ell = 0,...,k-1$, for some $k$-tuple $(d_0,...,d_{k-1}) \in D(I,j)^k$. 

Now suppose that the conditions of the lemma hold for $S$. Then we may choose $m$ so large that, up to a measure $\ve\mu(A)$, all of the $(1-\ve)|D(I,j)|^k$ subrectangles of $A$ are contained in $\bigcup_{n=-m,\, n\ne 0}^m S^n A$. Specifically, note that if $\frac{a_0-d_0}{\alpha_0} = \frac{a_\ell- d_\ell}{\alpha_\ell} \in \Z\setminus \{0\}$ for all $\ell$, then we should have 
\[
T^{a_0} J\times \cdots \times T^{a_{k-1}} J = S^n\big(T^{d_0} J \times \cdots \times T^{a_{k-1}} J \big)\subset S^n A,
\]
where $n = -\frac{a_0-d_0}{\alpha_0}$. Then $A$ is covered up to measure $\ve \mu(A)$ by $\bigcup_{n=-m,\, n\ne 0}^m S^n A$. Our choice of $\ve$ was arbitrary, so we must have $A \subset \bigcup_{n=-m,\, n\ne 0}^m S^n A \mod{\mu}$ for some $m \in \N$, and all sets $A$.

We know show that all sets in $\mathcal{D}$ have the same property: cut any rectangle in $\mathcal{D}$ into a disjoint union of rectangles in $\left(C_i\right)^k$ for some $i \in \N$, and $I$ the base of $C_i$. Then we may write each such constituent subrectangle as $T^{b_0} I \times \cdots \times T^{b_{k-1}} I$, where $b_\ell \in \{0,...,h_{i}-1\}$ for $\ell = 0,...,k-1$. 
Note that
\begin{align*}
T^{b_0} I \times \cdots \times T^{b_{k-1}} I  &= \left(T^{b_0} \times \cdots \times T^{b_{k-1}}\right) I \times \cdots \times I 
\\
&\subset \left(T^{b_0} \times \cdots \times T^{b_{k-1}}\right)\bigcup_{n \in \Z\setminus \{0\}} S^n\left(I \times \cdots \times I\right)
\\
& = \bigcup_{n \in \Z \setminus \{0\}} S^n \left( T^{b_0} I \times \cdots \times T^{b_{k-1}} I \right).
\end{align*}
This result holds for each of the constituent subrectangles of any rectangle in $\mathcal{D}$, so lemma \ref{L:krectangles} yields that $S$ is conservative. 
\end{proof}

For the proof of the following theorem, it will be helpful to have notation by which we can break up elements of $D(I,j)$ into their additive components. To this end, for any element $a \in D(I,j)$, we can and will write $a = \sum_{k=i}^{j-1} a_k$ where $a_k \in H_k$, by the decomposition $D(I, j) = H_i \oplus \cdots \oplus H_{j-1}$. We proceed similarly for $a',d$, and $d'$.

\begin{theorem}\label{T:invcons}
	Let $T$ be a rank-one transformation, and $n$ a nonzero integer. Then $T^n \times T^{-n}$ is conservative.
\end{theorem}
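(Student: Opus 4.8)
The plan is to reduce immediately to the case $n=1$ and then apply Proposition~\ref{L:conservprods2}. Since $(T\times T^{-1})^n = T^n\times (T^{-1})^n = T^n\times T^{-n}$ for every integer $n$, and all nonzero composition powers of a conservative invertible transformation are again conservative \cite[Corollary 1.1.4]{Aa97}, it suffices to prove that $U=T\times T^{-1}$ is conservative. I would therefore isolate this single assertion and devote the remainder of the argument to it.

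To prove that $T\times T^{-1}$ is conservative, I would invoke Proposition~\ref{L:conservprods2} with $k=2$, $\alpha_0=1$, and $\alpha_1=-1$. Unwinding the ratio condition $\frac{a-d}{\alpha_0}=\frac{a'-d'}{\alpha_1}\in\Z\setminus\{0\}$ in this case, it reads $a-d=-(a'-d')$ with $a\neq d$, i.e.\ exactly $a+a'=d+d'$ together with $a\neq d$. Thus conservativity of $T\times T^{-1}$ becomes the statement that for every $\ve>0$ there is a stage $j$ for which at least $(1-\ve)|D(I,j)|^2$ of the pairs $(a,a')\in D(I,j)^2$ admit a pair $(d,d')\in D(I,j)^2$ with $d\neq a$ and $a+a'=d+d'$; in words, almost every pair has its sum represented in $D(I,j)$ in more than one way.

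The heart of the argument is a coordinatewise swap that manufactures the second representation whenever $a\neq a'$. Writing the unique decompositions $a=\sum_{k=i}^{j-1}a_k$ and $a'=\sum_{k=i}^{j-1}a_k'$ with $a_k,a_k'\in H_k$ (uniqueness being precisely the content of the direct sum $D(I,j)=H_i\oplus\cdots\oplus H_{j-1}$), the hypothesis $a\neq a'$ forces some index $k$ with $a_k\neq a_k'$. I would then set $d=a-a_k+a_k'$ and $d'=a'-a_k'+a_k$. Since the altered coordinates still lie in $H_k$ and all other coordinates are unchanged, both $d$ and $d'$ lie in $D(I,j)$; clearly $a+a'=d+d'$; and $a\neq d$ because $a_k\neq a_k'$. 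Hence every off-diagonal pair is good.

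The only pairs not covered by this construction are the diagonal pairs with $a=a'$, of which there are exactly $|D(I,j)|$. Since each $r_k\geq 2$, we have $|D(I,j)|=\prod_{k=i}^{j-1}r_k\to\infty$ as $j\to\infty$, so the proportion of bad pairs is $1/|D(I,j)|$, which drops below any prescribed $\ve$ once $j$ is large enough. This verifies the hypothesis of Proposition~\ref{L:conservprods2} and finishes the proof. I do not anticipate a serious obstacle: the single point requiring care is the uniqueness of the decomposition $a=\sum a_k$ (so that the exceptional set is precisely the diagonal, and no larger), which is guaranteed by the spacing of the $H_k$ built into the direct-sum structure; everything else is a one-line counting estimate.
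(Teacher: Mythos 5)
Your proof is correct, and it reaches the theorem by a genuinely shorter route than the paper's. The paper does not perform your reduction to $n=1$ inside the proof (although its introduction records exactly that reduction); instead it verifies the general-$n$ condition of Proposition~\ref{L:conservprods2} directly. That forces it to produce, for almost every pair $(a,a')\in D(I,j)^2$, a complementary pair with $a-d=d'-a'\in n\Z\setminus\{0\}$, and the divisibility by $n$ is the whole difficulty: the paper pigeonholes $D(I,k)$ into congruence classes modulo $n$ to get a set $R(I,k)$ of pairs with $n\mid a-a'$ and $a\neq a'$ of density at least $\tfrac{1}{n^2}-\tfrac{1}{|D(I,k)|}$, and then argues that the proportion of pairs in $D(I,j)^2$ containing such a good block at some stage $k'\le k<j$ is at least $1-\left(1-\tfrac{1}{2n^2}\right)^{j-k'}\to 1$; the complementary pair is then obtained by the same sum-preserving swap you use, applied to that block. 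Your reduction via $(T\times T^{-1})^n=T^n\times T^{-n}$ and \cite[Corollary 1.1.4]{Aa97} makes all of this machinery unnecessary: when $n=1$ every nonzero integer is a nonzero multiple of $n$, so any differing coordinate can be swapped, the bad set is exactly the diagonal, and the count is trivial. (In fact for $n=1$ you need no coordinate decomposition at all: for any $a\neq a'$ the reversed pair $(d,d')=(a',a)$ already satisfies $a-d=d'-a'=a-a'\in\Z\setminus\{0\}$, so even the direct-sum uniqueness you flag as the delicate point can be bypassed.) What each approach buys: yours is shorter and avoids the paper's fiddliest step, the stage-by-stage density estimate across the height sets; the paper's is self-contained within the descendant framework---it never invokes the external fact about powers of conservative transformations---and it gives the explicit quantitative rate $1-\left(1-\tfrac{1}{2n^2}\right)^{j-k'}$ for each $n$ separately.
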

\begin{proof}
	Let $A = I \times I$, where $I$ is the base of any column $C_i$. It suffices to show (by Proposition \ref{L:conservprods2}) that for every $\epsilon > 0$ there is $j$ such that at least $(1-\ve)|D(I,j)|^2$ of the pairs $(a,a') \in D(I,j)^2$ have complementary pairs $(d,d') \in D(I,j)^2$ with $a - d = d' - a' \in n \Z \setminus \{0\}$. 

	For any $k \in \N$, let $R'(I,k) \subset D(I,k)^2$ denote the largest subset of $D(I,k)^2$ containing pairs $(a,a')$ which have $n\, \big|\, a-a'$. A pigeonhole argument shows that $|R'(I,k)| \ge \frac{1}{n^2} \, |D(I,k)|^2$ (take all pairs of elements drawn from the largest intersection of $D(I,k)$ with a congruence class modulo $n$). Of all of the pairs in $R'(I,k)$, at most $|D(I,k)|$ have $a = a'$. Hence, at least $\frac{1}{n^2} \, |D(I,k)|^2 - |D(I,k)|$ of the pairs in $D(I,k)^2$ have $n\, \big|\, a-a'$ and $a \ne a'$. Call the set of all such pairs $R(I,k)$, and note that $\frac{|R(I,k)|}{|D(I,k)|^2} \ge \frac{1}{n^2} - \frac{1}{|D(I,k)|}$. Recalling that $D(I,k) \ge 2^{k- i}$ and is monotonically increasing in $k$ whenever $k \ge i$, we let $k'$ denote the smallest integer $k \ge i$ such that $|D(I,k)| > 2n^2$. 
	
	For any $j > k'$, consider any pair $(a,a') \in D(I,j)^2$ which has $(a_k,a'_k) \in R(I,k)$ for some $k$ with $k' \le k < j$. Denote this particular value of $k$ by $k^*$. We construct $(d,d') \in D(I,j)^2$ as follows: for $k \ne k^*$, set $d_k = a_k$ and $d_k' = a_k'$. Then set $d_{k^*} = a_{k^*}'$ and $d_{k^*}' = a_{k^*}$. Then we clearly have $a + a' = d+ d'$, and thus $a - d = d' - a' = a_{k^*} - a'_{k^*} \in n \Z \setminus \{0\}$. Furthermore, the proportion of such pairs inside of $D(I,j)^2$ is lower-bounded by 
	\begin{align*}
	1 - \prod_{k = k'}^{j-1} \left( 1- \frac{|R(I,k)|}{|D(I,j)|^2}\right) &
\ge 1- \prod_{k=k'}^{j-1} \left(\frac{1}{n^2} - \frac{1}{|D(I,k)|} \right)
	\\
	 &\ge  1 - \left( 1- \frac{1}{2n^2}\right)^{j-k'}.
	\end{align*}
This quantity goes to $1$ as $j$ grows large, so we may conclude that $T^n \times T^{-n}$ is conservative. 
\end{proof}


\section{$T\times T$  ergodic but $T\times T^{-1}$  not ergodic}\label{S:Tinv} 

In this section, we use the combinatorial results of Section \ref{S:comb} to construct a class of rank-one transformations $T$  such that $T\times T$ is ergodic but $T\times T^{-1}$  is not ergodic.
To obtain ergodicity of the Cartesian square we just need  $\gamma_k = \Gamma_k$ for all $k$
with  arbitrary $\Gamma_k$.

\begin{theorem}\label{T:txterg}
	Let $T$ be defined using the height sets given in Lemma \ref{L:comb}, and by setting $\gamma_k = \Gamma_k > 0$ for every $k \ge 0$. Then $T^{(2)}$ is ergodic.
\end{theorem}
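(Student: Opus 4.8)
The goal is to verify the sufficient condition for ergodicity of $T^{(2)} = T \times T$ given by Proposition~\ref{P:kproduct}. That is, fixing $\ve > 0$, a column index $i$, the base $I$ of $C_i$, and an offset $b$ with $0 \le b < h_i$, I must produce some $j > i$ such that at least a $(1-\ve)$-fraction of the pairs $(a,a') \in D(I,j)^2$ admit a \emph{unique} complementary pair $(d,d') \in D(I,j)^2$ with $a - d = a' - d' - b$. The decomposition $D(I,j) = H_i \oplus \cdots \oplus H_{j-1}$ lets me write $a = \sum_{k=i}^{j-1} a_k$, and likewise for $a', d, d'$, and then attack the condition coordinate-by-coordinate across the height sets $H_k$.

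\textbf{Key steps.} The central observation, already flagged in the discussion following the definition of mixed/pure pairs, is that the desired relation $a - d = a' - d' - b$ can be achieved by finding exactly $b$ indices $k$ at which $(a_k, a_k')$ is a \emph{negative mixed} pair and replacing it by its \emph{corresponding positive mixed} pair $(d_k, d_k')$ so that $a_k - d_k = a_k' - d_k' - 1$; at all remaining indices one sets $d_k = a_k$, $d_k' = a_k'$, contributing $0$ to the running difference. Summing these local differences over the $b$ chosen indices yields exactly $a - d = a' - d' - b$. Thus the real task reduces to a counting statement: I must show that, for $j$ large, the overwhelming majority of pairs $(a,a') \in D(I,j)^2$ have \emph{at least} $b$ coordinates $k$ where $(a_k,a_k')$ is negative mixed. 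Since $\gamma_k = \Gamma_k$, negative mixed pairs occur with a fixed positive frequency at each level (there are $4\Gamma_k\gamma_k = 4\Gamma_k^2$ negative mixed pairs out of $(2\Gamma_k + 2\gamma_k)^2 = 16\Gamma_k^2$ total ordered pairs, a constant proportion $\tfrac14$ independent of $k$), so the expected number of negative-mixed coordinates grows linearly in $j - i$. A routine large-deviation or Borel--Cantelli-style estimate (e.g. the number of coordinates among $i, \dots, j-1$ at which a pair is negative mixed concentrates around $\tfrac14(j-i)$) then shows the proportion of pairs with fewer than the fixed number $b$ of such coordinates tends to $0$ as $j \to \infty$. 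Choosing $j$ large enough drives this proportion below $\ve$.

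\textbf{Uniqueness.} Proposition~\ref{P:kproduct} demands that the complementary pair be \emph{uniquely} determined, and this is where Lemma~\ref{L:comb} does its work. Given $(a,a')$ with its chosen $b$ negative-mixed coordinates, I must argue that the construction above is forced: any $(d,d')$ satisfying $a - d = a' - d' - b$ must agree with the one built. The relation rearranges to $a + d' = a' + d + b$, i.e.\ a constrained additive coincidence among elements of $H_k$ across coordinates; the separation hypothesis $M_k \gg 2\max D(I,k)$ from Remark~\ref{R:construction} forces the difference to be resolved level-by-level (no carrying between the widely separated height sets), reducing the global equation to local ones of the form $|a_k + d_k' - a_k' - d_k| < M_k$. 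Lemma~\ref{L:comb}(2) then pins down, at each level, exactly which pairs $\{a_k,a_k'\}, \{d_k,d_k'\}$ can produce the off-by-$b$ discrepancy and in which configuration, and the one-to-one correspondence between negative and positive mixed pairs (noted in the definition to be one-to-one) yields uniqueness of $(d,d')$.

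\textbf{Main obstacle.} The bookkeeping in the counting step is the one genuinely delicate part: I must confirm that choosing exactly $b$ of the negative-mixed coordinates to ``flip'' (and which $b$ of them) does not create ambiguity, and simultaneously that the separation between height sets prevents distinct coordinate-wise choices from colliding into the same global value of $d$ or $d'$. In other words, the tension is between \emph{existence} (needing enough negative-mixed coordinates, handled by the law-of-large-numbers count) and \emph{uniqueness} (needing the level separation to forbid alternative decompositions, handled by Lemma~\ref{L:comb} and the choice of $M_k$). Reconciling these — ensuring the flipped-coordinate recipe is both realizable for almost all pairs and forced once realized — is the crux of the argument; the remaining estimates are routine.
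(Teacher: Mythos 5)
Your proposal follows the same route as the paper's proof: reduce to Proposition~\ref{P:kproduct}, decompose descendants as $D(I,j) = H_i \oplus \cdots \oplus H_{j-1}$, and build $(d,d')$ from $(a,a')$ by exchanging negative mixed coordinate pairs for their corresponding positive mixed pairs. Your frequency count (a proportion $1/4$ of ordered pairs in $H_k^2$ are negative mixed) is the paper's, and your law-of-large-numbers estimate would indeed show that most pairs have at least $b$ negative mixed coordinates somewhere. The paper's counting is even more elementary: it asks for a full consecutive block $k = i,\ldots,i+b-1$ of negative mixed coordinates (then the block $i+b,\ldots,i+2b-1$ for the remaining pairs, and so on), so the failure proportion after $m$ blocks is $\left(1-4^{-b}\right)^m$, with no concentration inequality needed.

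The genuine gap is your uniqueness step, and the problem is that your proposed resolution is false rather than merely unfinished. You claim that any $(d,d')$ with $a-d = a'-d'-b$ must coincide with the constructed one. But your own counting guarantees that a typical pair has roughly $(j-i)/4 \gg b$ negative mixed coordinates, and flipping \emph{any} $b$ of them yields a valid complement; there are further complements from zero-contribution exchanges permitted by Lemma~\ref{L:comb}(2) (for instance, at a coordinate with $a_k = a_k'$ one may take $d_k = d_k'$ to be any element of $H_k$, and at a coordinate with $(a_k,a_k') = (V_r,V_s)$ one may take $(d_k,d_k') = (W_s,W_r)$, since all pairs in $H_k(U)$ have equal sum). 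So no pair has a literally unique complement, and ``forcedness'' cannot be the argument. What Proposition~\ref{P:kproduct} actually needs---see its proof, where $\tau = \bigsqcup_{(a,a')} \tau_{(a,a')}$ must be one-to-one in order to lie in $[[T\times T]]$---is a \emph{well-defined, injective} assignment $(a,a') \mapsto (d,d')$ on a large set of pairs. Your scheme never specifies which $b$ negative coordinates to flip (you flag this yourself as the ``main obstacle''), and the natural canonical choice ``flip the first $b$ negative coordinates'' is not injective: a pair that is positive mixed at one coordinate and negative mixed at a later one, and a companion pair with those two roles reversed, can be sent to the same image. The device in the paper, which is the idea missing from your writeup, is to prescribe the flip locations in advance rather than per pair: flip the entire fixed block $k=i,\ldots,i+b-1$ exactly for those pairs that are negative mixed throughout it, then treat the remaining pairs with the next fixed block, and so on; within each group the assignment is invertible because the negative-to-positive correspondence is one-to-one (even there some care about disjointness of images across different groups is required, a point on which the paper is terse). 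As written, your proposal correctly identifies the crux but does not supply the mechanism that resolves it.
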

\begin{proof}
	Suppose that $T$ is as specified. We will apply Proposition \ref{P:kproduct}. To do so, for any $i \in \N$, $\ve > 0$, $b \in \{0,..,h_i -1 \}$, we must show that there exists a natural number $j > i$ such that at least $(1-\ve)\, |D(I,j)|^2$ pairs $(a,a')$ of descendants of the base $I$ of $C_i$ can be associated to unique complementary pairs $(d,d') \in D(I,j)^2$ satisfying $a - d = a'-d' - b$. 
	
	Recall that the descendants of $I$ in $C_j$ can be given by $D(I,j) = H_i \oplus H_{i+1} \oplus \dots \oplus H_{j-1}$. So we can decompose any element $c \in D(I,j)$ into its sum components as $c = \sum_{k=i}^{j-1} c_k$, where $c_k \in H_k$. We will employ this notation for pairs $(a,a')$ and their corresponding pairs $(d,d')$. Given any $k \ge i$, also recall that $H_k$ contains $2\gamma_k + 2 \Gamma_k = 4\gamma_k$ elements, so $H_k^2$ contains $16\gamma_k^2$ elements. On the other hand, the number of negative mixed pairs in $H_k$ is given by $4 \gamma_k^2$. So the proportion of pairs in $H_k^2$ which are negative mixed is $1/4$. Thus, taking any $j\gg i+b$, the proportion of pairs $(a,a')$ in $D(I,j)^2$ which have $(a_k,a'_k)$ negative mixed for $k = i,...,i+b-1$ is $\frac{1}{4^b}$. For any such pair, we can take $(d_k,d_k')$ to be the positive mixed pair corresponding to $(a_k,a'_k)$ for $k = i,...,i+b-1$, and $d_k = a_k,\, d_k' = a_k'$ elsewhere. Then we should clearly have $a - d = a'- d' - b$, and to every such $(a,a')$ we can associate a unique pair $(d,d')$ (as $(d_k,d_k')$ is the unique positive mixed pair corresponding to $(a_k,a_k')$ whenever the pairs are not chosen to be exactly equal). 
	
	We have now deduced that at most $1-\frac{1}{4^b}$ of the pairs in $D(I,j)^2$ do not meet the precondition for ergodicity given in Proposition \ref{P:kproduct} whenever $j \gg i+b$. Taking $j$ sufficiently high and then considering the proportion of pairs $(a,a') \in D(I,j)$ which have $(a_k,a_k')$ as negative mixed pairs in $H_{i+b},...,H_{i+2b-1}$, which is independent of the previous case, we can reduce this proportion to $\left(1- \frac{1}{4^b}\right)^2$. Continuing in this manner, the proportion of unsatisfactory pairs may be made arbitrarily small, from which we can deduce that $T\times T$ is ergodic. 
\end{proof}

\begin{theorem}\label{T:ieinotcons}
	Let $T$ be a rank-one transformation constructed using a sequence $0 < \{\gamma_\ell\}$ that satisfies 
	\[
		0 < \prod_{\ell \in \N} \left(1 - \frac{1}{4\gamma_\ell}\right) 
	\]
	and $ \gamma_k=\Gamma_k $ for all $k \ge 0$. Then $T\times T$ is ergodic but $U = T \times T^{-1}$ is not ergodic.
\end{theorem}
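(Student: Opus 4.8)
The plan is to treat the two assertions separately, as they draw on different parts of the preceding development. The ergodicity of $T \times T$ is essentially free: the hypotheses here include $\gamma_k = \Gamma_k$ for all $k$ and $\gamma_\ell > 0$, which are exactly the hypotheses of Theorem~\ref{T:txterg}. Hence $T^{(2)} = T \times T$ is ergodic, and all of the real work lies in showing that $U = T \times T^{-1}$ is \emph{not} ergodic. For that part, I would argue by contradicting the necessary condition for ergodicity furnished by Proposition~\ref{P:arbproduct}, applied to $S = T^{1} \times T^{-1}$ (so $\alpha_0 = 1$, $\alpha_1 = -1$). Since $U$ acts on a non-atomic infinite measure space, ergodicity of $U$ would force $U$ to be conservative ergodic, so the ``only if'' direction of Proposition~\ref{P:arbproduct} would have to hold for \emph{every} choice of $\ve$, $i$, and $(b_0,b_1)$. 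Thus it suffices to exhibit a single choice of these parameters for which the covering condition fails for all $j$.

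Concretely, I would fix some $i$ with $h_i \ge 2$ and take $(b_0, b_1) = (1,0)$, so that $b_0 + b_1 = 1$. Unwinding the condition of Proposition~\ref{P:arbproduct} with $\alpha_0 = 1$, $\alpha_1 = -1$, a pair $(a,a') \in D(I,j)^2$ admits an admissible complement $(d,d')$ only if $\frac{a - d - 1}{1} = \frac{a' - d'}{-1} = n \in \Z \setminus \{0\}$, which rearranges to $a + a' = d + d' + 1$. This is precisely the hypothesis of Lemma~\ref{L:mixed}, which then guarantees that such a complement can exist only when there is some coordinate $k$ with $\{a_k, a_k'\} \in H_k(U)$ and $\{d_k, d_k'\} \in H_k(L)$ (or vice versa); in particular $(a_k, a_k')$ must be a \emph{pure} pair in the sense of the Definition. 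Consequently, any pair $(a,a')$ possessing \emph{no} pure coordinate automatically fails the condition of Proposition~\ref{P:arbproduct}.

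The remaining step is a counting estimate. Using the unique decomposition $D(I,j) = H_i \oplus \cdots \oplus H_{j-1}$, the coordinates $(a_k, a_k') \in H_k^2$ of a pair $(a,a')$ vary independently. Since $\gamma_k = \Gamma_k$ gives $|H_k| = 4\gamma_k$ and the number of ordered pure pairs is $2(\Gamma_k + \gamma_k) = 4\gamma_k$, the proportion of pure pairs at level $k$ is exactly $\frac{4\gamma_k}{16\gamma_k^2} = \frac{1}{4\gamma_k}$. Hence the proportion of pairs in $D(I,j)^2$ having no pure coordinate is $\prod_{k=i}^{j-1}\left(1 - \frac{1}{4\gamma_k}\right)$, and because the product over all of $\N$ is positive by hypothesis, a tail bound gives the uniform lower bound $\prod_{k=i}^{j-1}\left(1 - \frac{1}{4\gamma_k}\right) \ge \prod_{\ell \in \N}\left(1 - \frac{1}{4\gamma_\ell}\right) =: c > 0$, independent of $j$. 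Choosing any $\ve < c$, at least a $c$-fraction (and so strictly more than an $\ve$-fraction) of the pairs fail the condition for every $j$, so the covering condition of Proposition~\ref{P:arbproduct} cannot hold; therefore $U$ is not conservative ergodic, and since ergodicity on our non-atomic space would imply conservativity, $U$ is not ergodic.

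The main obstacle has in fact already been absorbed into Lemma~\ref{L:mixed} and the combinatorial construction of the height sets $H_k$, so within this theorem the delicate points are purely organizational: correctly translating ``$U$ not ergodic'' into the negation of the Proposition~\ref{P:arbproduct} covering condition via the conservative-from-ergodic implication, and verifying that the finite products $\prod_{k=i}^{j-1}\left(1 - \frac{1}{4\gamma_k}\right)$ stay bounded below by the full infinite product uniformly in $j$. The one computation to double-check is that $b_0 + b_1 = 1$ is attainable inside $\{0, \dots, h_i - 1\}^2$, which is why I select $i$ with $h_i \ge 2$ at the outset.
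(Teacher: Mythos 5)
Your proposal is correct and follows essentially the same route as the paper: invoke Theorem~\ref{T:txterg} for ergodicity of $T\times T$, then contradict the necessary condition of Proposition~\ref{P:arbproduct} by using Lemma~\ref{L:mixed} to show any admissible pair must have a pure coordinate, and bound the proportion of such pairs away from $1$ via the infinite product hypothesis. The only (immaterial) difference is your choice $(b_0,b_1)=(1,0)$ versus the paper's $(0,1)$, both of which reduce to the same equation $a+a'=d+d'+1$.
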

\begin{proof} Ergodicity of $T\times T$ follows from Theorem~\ref{T:txterg}. 
	We will proceed by contradiction by supposing that $U$ is conservative ergodic. Letting $I$ be the base of an arbitrary column $C_i$, let $A = I\times I$ and $B = I \times T(I)$ (that is, choose $b_0 = 0$ and $b_1 = 1$). Then by Proposition \ref{P:arbproduct}, for every $\epsilon > 0$ there exists $j$ such that for at least $(1 - \epsilon)|D(I,j)|^2$ pairs of descendants $(a, a') \in D(I, j)^2$ we have $a+ a' = d + d' + 1$ for some $n$. By Lemma \ref{L:mixed}, this occurs for a pair $(a,a')$ only if there exists some $k\in \{i,\ldots,n-1\}$ such that $(a_k,a_k')$ is a pure pair. But there are only $2(2\gamma_k)$ possible pure pairs in $H_k$ out of $16\gamma_k^2$ total pairs. Let $P \subset D(I,j)^2$ denote the set of pairs $(a,a')\in D(I,j)^2$ such that $(a_k,a_k')$ is never pure for $i \le k < j$. Then the proportion of pairs in $D(I,j)^2$ with at least one additive component pair $(a_k,a_k'),\, i \le k < j$ pure is $\frac{|P|}{|D(I,j)|^2}$, and 
	\begin{align*}
		\frac{|P|}{|D(I,j)|^2} &= 1 - \frac{|P^c|}{|D(I,j)|^2}\\
		&= 1-\prod_{\ell = i}^{j-1} \left(1 - \frac{1}{4\gamma_\ell}\right)
	\end{align*}
	Since this quantity is strictly less than $1$, the proportion of pairs $(a,a') \in D(I,j)$ with a complementary pair $(d,d')$ satisfying $a+ a' = d+ d' + 1$ must be bounded above by a number less than $1$. For any choice $0< \epsilon <\prod_\ell (1 - 1/4\gamma_\ell)  $, this contradicts ergodicity of $U$.
\end{proof}

Regarding ergodicity of higher products, we note that $T\times T\times T$ ergodic is equivalent to
 the statement that for any $b_0, b_1, b_2$ and $I$ the base of some column, the proportion of triples $(a_0, a_1, a_2) \in D(I, j)^3$ having uniquely associated corresponding descendant triples $(d_0, d_1, d_2) \in D(I, j)^3$ with
\[a_0 - d_0 - b_0 = a_1 - d_1 - b_1 = a_2 - d_2 - b_2\]
goes to 1 as $j\to\infty$. We can write this in a slightly nicer form, letting $b_0 = b$ and $b_1 = b_2 = 0$, as
\begin{align*}
a_0 + d_1 &= a_1 + d_0 + b\\
a_0 + d_2 &= a_2 + d_0 + b.
\end{align*}
It remains open as to whether this condition can correspond to $T \times T^{-1} $ not being ergodic.

\section{A Markov shift with $T \times T^{-1}$ not conservative}

In this section we construct a conservative ergodic Markov shift $T$ such that $T\times T^{-1}$ is not conservative. This is based on the examples of Kakutani and Parry \cite{KaPa63}. For further background and terms not defined below regarding Markov shifts, the reader is referred to \cite{Aa97}.

\subsection{Preliminaries on Markov shifts}
We briefly recall some properties of infinite measure-preserving countable state Markov shifts. Let $S$ be a countable set, which in our case will be $\mathbb Z$, and let $P$ be a stochastic matrix over $S$. Let  $\lambda$ be a vector indexed by $S$ that is a left-eigenvector of $P$ with  eigenvalue $1$, so $\lambda P = \lambda$, and assume that $\sum_{s \in S}\lambda_s=\infty$. Let $X = S^{\Z}$, let $\mathcal{B}$ be the Borel $\sigma$-algebra generated by cylinder  sets of the form
\begin{equation*}
	[s_0 \ldots s_n]_k = \{x \in X \mid x_{j + k} = s_j \text{ for all } k = 0, \ldots, n \}.
\end{equation*}
Define a measure on these sets by
\begin{equation*}
	\mu_{\lambda}([s_0 \ldots s_n]_k) = \lambda_{s_0} p_{s_0, s_1} p_{s_1, s_2} \ldots p_{s_{n-1}, s_n}
\end{equation*}
and let $T$ be the left shift  on $X$. Then  $T$ preserves $\mu_{\lambda}$. The tuple $(X, \mathcal{B}, \mu, T)$ is called a {\bf $\sigma$-finite Markov shift}.\\

Let $P^n$ be the matrix $P$ taken to the $n$th power, and let $p_{s,t}^{(n)}$ be the $(s,t)$-th entry of $P^n$.	A Markov shift is called {\bf irreducible} if for each $s, t \in S$, we have that $p_{s,t}^{(n)} > 0$ for some $n$. The following can be found in \cite{Aa97}.

\begin{theorem}
	Let $T$ be an irreducible Markov shift. If there is $s \in S$ such that $\sum_{n = 1}^\infty p_{s,s}^{(n)} = \infty$, then $T$ is conservative. Conversely, if there is $s$ such that $\sum_{n = 1}^\infty p_{s,s}^{(n)} < \infty$, then $T$ is not conservative. Furthermore, if $T$ is irreducible and conservative, then it is ergodic.
\end{theorem}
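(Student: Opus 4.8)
The statement to prove is the classical characterization of conservativity and ergodicity for irreducible Markov shifts, attributed to Aaronson \cite{Aa97}.

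The plan is to reduce everything to a Borel--Cantelli / first-return analysis at a single state $s$, exploiting irreducibility to propagate the conclusion to the whole chain. First I would recall the probabilistic meaning of the quantity $\sum_{n} p_{s,s}^{(n)}$: by the standard renewal decomposition, $p_{s,s}^{(n)} = \mu_\lambda\bigl([s]_0 \cap T^{-n}[s]_0\bigr)/\lambda_s$, so the series $\sum_n p_{s,s}^{(n)}$ is (up to the factor $\lambda_s$) the expected number of returns of the orbit to the cylinder $[s]_0$. The dichotomy between recurrence and transience of the state $s$ is exactly the dichotomy $\sum_n p_{s,s}^{(n)} = \infty$ versus $< \infty$. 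I would phrase this via the Markov operator: writing $A = [s]_0$, one has $\sum_{n\ge 1}\mu_\lambda(A \cap T^{-n}A) = \lambda_s \sum_{n\ge 1} p_{s,s}^{(n)}$, and conservativity of $T$ is equivalent, by the Halmos recurrence criterion, to almost every point of any positive-measure set returning to it infinitely often.

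The key steps, in order, are as follows. For the forward (conservative) direction, assume $\sum_n p_{s,s}^{(n)} = \infty$. The divergence of the expected number of returns to $A = [s]_0$ forces, via a conditional Borel--Cantelli argument adapted to the stationary Markov measure, that almost every point of $A$ returns to $A$ infinitely often; hence $A$ is a recurrent set. Then I would use irreducibility to upgrade this from the single cylinder $A$ to an arbitrary positive-measure set: since the shift is irreducible, every state $t$ communicates with $s$, so every cylinder has a positive-measure forward image landing in $A$, and the recurrence of $A$ pulls back to recurrence of an arbitrary cylinder. As cylinders generate $\mathcal{B}$, Lemma-type density arguments give that $T$ satisfies the recurrence condition on a sufficient semiring, whence $T$ is conservative. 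For the converse, if $\sum_n p_{s,s}^{(n)} < \infty$, then by Borel--Cantelli almost every point of $A$ returns to $A$ only finitely often, so $A$ is a wandering set of positive measure, which directly violates conservativity.

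For the final clause---that irreducible and conservative implies ergodic---I would invoke the Hopf-type ergodic decomposition for conservative measure-preserving transformations: any $T$-invariant set $E$ decomposes the state space, but irreducibility means the state graph is connected, so an invariant set cannot separate communicating states without having measure zero or full measure. Concretely, if $E$ is invariant with $\mu_\lambda(E) > 0$, then $E$ meets some cylinder $[t]_0$ in positive measure; conservativity plus irreducibility then spread $E$ across every cylinder by following a path of positive-probability transitions from $t$ to any other state, forcing $\mu_\lambda(E^c) = 0$. The main obstacle I expect is the careful handling of the conditional Borel--Cantelli step in the conservative direction: the increments ``return to $A$ at time $n$'' are not independent, so one must use the Markov structure (the strong Markov property at successive return times, giving an i.i.d.\ sequence of excursion lengths whose common return probability is governed by whether $\sum_n p_{s,s}^{(n)}$ diverges) rather than naive independence. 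Since this is a standard result whose proof is available in \cite{Aa97}, I would cite it there rather than reproduce the renewal-theoretic details in full.
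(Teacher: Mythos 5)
The paper does not actually prove this theorem: it is quoted as background with the remark that it ``can be found in \cite{Aa97}'', so there is no internal argument to compare yours against. Your sketch is the standard textbook proof and is essentially sound: the identity $\mu_\lambda\bigl([s]_0\cap T^{-n}[s]_0\bigr)=\lambda_s\,p_{s,s}^{(n)}$, the recurrence/transience dichotomy via Borel--Cantelli and the renewal (strong Markov) structure, propagation of recurrence through irreducibility to the generating family of cylinders (the same kind of semiring reduction the paper isolates in Lemma~\ref{L:krectangles}), and the invariant-set argument for ergodicity; ending by deferring the renewal-theoretic details to \cite{Aa97} is exactly how the paper itself treats the result.

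One slip worth fixing in the transient direction: $\sum_n p_{s,s}^{(n)}<\infty$ does \emph{not} make $A=[s]_0$ itself a wandering set --- $A$ may still meet its preimages $T^{-n}A$ in positive measure, just summably so. What Borel--Cantelli gives is that almost every point of $A$ returns only finitely often, and from this you must \emph{extract} a wandering subset of positive measure. Take $W=A\setminus\bigcup_{n\ge 1}T^{-n}A$. If $\mu(W)=0$, then (up to a null set) every point of $A$ returning for the last time at step $k$ lies in $T^{-k}W$, so the set of points returning only finitely often has measure at most $\sum_k\mu(T^{-k}W)=0$; hence almost every point of $A$ would return infinitely often, contradicting Borel--Cantelli since $\mu(A)=\lambda_s>0$. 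Thus $\mu(W)>0$, and $W\cap T^{-n}W\subset W\cap T^{-n}A=\emptyset$ for all $n\ge 1$, so $W$ is a genuine wandering set and conservativity fails. With that correction your outline is complete at the level of detail appropriate for a cited classical result.
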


We will use the following theorem of Kakutani and Parry.  

\begin{theorem}[\cite{KaPa63}]
	The following conditions hold if and only if $T^{(k)} = T \times \cdots \times T$ is ergodic:
	\begin{itemize}
		\item [$I_k$.] If $s_1, \ldots, s_k, t_1, \ldots, t_k \in S$, there is $n$ with $p_{s_1, t_1}^{(n)}, \ldots, p_{s_k,t_k}^{(n)} > 0$
		\item [$II_k$.] $\sum_{n=1}^\infty p_{0,0}^{(n)} = \infty$.
	\end{itemize}
\end{theorem}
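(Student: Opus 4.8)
The plan is to realize the $k$-fold product $T^{(k)}$ as itself a $\sigma$-finite Markov shift and then read off both implications from the recurrence dichotomy quoted above from \cite{Aa97}. Concretely, $T^{(k)}$ is the left shift on $(S^k)^{\Z}$ whose transition matrix is the Kronecker product $P^{\otimes k}$, with entry $p_{s_1,t_1}\cdots p_{s_k,t_k}$ from $\mathbf s=(s_1,\dots,s_k)$ to $\mathbf t=(t_1,\dots,t_k)$, and whose stationary vector is $\lambda^{\otimes k}$. First I would verify that this fits the definition of a $\sigma$-finite Markov shift: $(\lambda^{\otimes k})(P^{\otimes k})=(\lambda P)^{\otimes k}=\lambda^{\otimes k}$, the total mass is $(\sum_{s}\lambda_s)^k=\infty$, and on cylinders $\mu_{\lambda^{\otimes k}}$ agrees with the product measure $\mu_\lambda\times\cdots\times\mu_\lambda$ that $T^{(k)}$ actually preserves. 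The one identity I would isolate for repeated use is $(P^{\otimes k})^n=(P^n)^{\otimes k}$, so that $n$-step transition probabilities factor, $p^{(n)}_{\mathbf s,\mathbf t}=\prod_{i=1}^k p^{(n)}_{s_i,t_i}$; in particular the return probability to the diagonal state $\mathbf 0=(0,\dots,0)$ is $(p^{(n)}_{0,0})^k$.

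For the ``if'' direction, condition $I_k$ says exactly that for every pair $\mathbf s,\mathbf t$ of product states there is an $n$ with $\prod_i p^{(n)}_{s_i,t_i}=p^{(n)}_{\mathbf s,\mathbf t}>0$, i.e. that $P^{\otimes k}$ is irreducible. Condition $II_k$, through the factorization, is the divergence of the diagonal return series $\sum_n (p^{(n)}_{0,0})^k=\sum_n p^{(n)}_{\mathbf 0,\mathbf 0}=\infty$ (the $k$-th power here is precisely the feature that lets one tune the ergodic index in the Kakutani--Parry examples). This is the hypothesis of the quoted theorem applied to the state $s=\mathbf 0$, so the product chain is conservative, and the last clause of that theorem---irreducible and conservative implies ergodic---yields ergodicity of $T^{(k)}$ with no further argument.

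For the ``only if'' direction I would argue contrapositively. If $II_k$ fails, then $\sum_n p^{(n)}_{\mathbf 0,\mathbf 0}<\infty$ and the converse half of the quoted theorem makes the product chain non-conservative; since an ergodic transformation of an infinite-measure space is automatically conservative (as recalled in the introduction), $T^{(k)}$ cannot then be ergodic. If $I_k$ fails, then some $\mathbf t$ is unreachable from some $\mathbf s$, so $P^{\otimes k}$ is reducible, and I would produce a nontrivial invariant set by choosing a recurrent communicating class $C\subsetneq S^k$ of positive $\lambda^{\otimes k}$-mass and setting $E_C=\{x:x_0\in C\}$; since almost every path stays inside its recurrent class for all time, $(T^{(k)})^{-1}E_C=E_C \bmod\mu$ with $0<\mu(E_C)<\mu(X^k)$, contradicting ergodicity.

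The step I expect to be the main obstacle is this last one---converting the combinatorial failure of $I_k$ into an invariant set of strictly intermediate measure. It requires some care with the communicating-class decomposition of $S^k$: one must rule out the degenerate possibility that reachability fails even though all of the $\lambda^{\otimes k}$-mass sits in a single recurrent class, and one must use conservativity to guarantee that (almost) every state is recurrent, so that $E_C$ is genuinely $T^{(k)}$-invariant for the two-sided shift rather than merely forward-absorbing. Everything else reduces to the factorization $(P^{\otimes k})^n=(P^n)^{\otimes k}$ and a direct appeal to the dichotomy already quoted from \cite{Aa97}.
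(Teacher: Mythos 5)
The paper offers no proof of this statement: it is quoted directly from Kakutani and Parry \cite{KaPa63} and used as a black box, so there is no internal argument to compare yours against. Judged on its own merits, your proof is the standard one and is essentially correct. Identifying $T^{(k)}$ with the Markov shift on $(S^k)^{\Z}$ governed by the Kronecker power $P^{\otimes k}$ with stationary vector $\lambda^{\otimes k}$, using $(P^{\otimes k})^n=(P^n)^{\otimes k}$ to factor the $n$-step probabilities, and then reading off conservativity and ergodicity from the dichotomy quoted earlier in the paper is exactly how this equivalence is proved. Your treatment of the reducible case also goes through, and the degeneracy you worry about cannot occur: since every $\lambda_s>0$, every product state carries positive mass, so a failure of $I_k$ produces two charged states in distinct communicating classes; assuming ergodicity (hence conservativity, the measure being non-atomic), almost every path stays in the communicating class of its time-zero state, so your set $E_C$ is invariant, has positive measure, and misses the cylinder over the unreachable state. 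Two small repairs: in the case where $II_k$ fails, the quoted dichotomy requires irreducibility, so either order the cases (if $I_k$ fails you are already done; otherwise $P^{\otimes k}$ is irreducible) or argue non-conservativity directly by Borel--Cantelli on the cylinder $[\mathbf{0}]_0$; and the step ``ergodic implies conservative'' should be anchored to non-atomicity of the product measure, which holds because the base chain is irreducible and recurrent on an infinite state space.

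The one point you must not pass over silently is that you have changed the statement. You read $II_k$ as $\sum_n \bigl(p^{(n)}_{0,0}\bigr)^k=\infty$, whereas the theorem as printed reads $\sum_n p^{(n)}_{0,0}=\infty$, with no exponent. Your reading is the correct one---it is the actual condition in \cite{KaPa63}, and it is the only reading under which the theorem can hold, since the printed condition does not depend on $k$ at all: for the lazy simple random walk on $\Z$ one has $p^{(n)}_{0,0}\asymp n^{-1/2}$, so $I_3$ and the printed $II_3$ both hold, yet $\sum_n \bigl(p^{(n)}_{0,0}\bigr)^3<\infty$ makes the triple product non-conservative and hence non-ergodic. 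As you observe, the exponent is precisely what lets Kakutani and Parry tune the ergodic index. So your argument is sound, but it proves the corrected statement; the correction should be flagged explicitly rather than folded into the phrase ``through the factorization.''
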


In \cite{KaPa63}, the authors construct a family of Markov shifts that have ergodic index $k$ as follows. For some $\epsilon > 0$ (the choice of which determines the ergodic index of the shift), they let $p_{i,i + 1} = (1 - \epsilon/i)/2$, $p_{i, i-1} = (1 + \epsilon/i)/2$ if $i \neq 0$, $p_{0,1} = p_{0,-1} = 1/2$, and $p_{i,j} = 0$ if $j \neq i + 1$ and $j \neq i - 1$. They also define, for $i$ positive,
\begin{equation*}
	\lambda_i = \frac{i \cdot \Gamma(1 + \epsilon) \Gamma(i - \epsilon)}{\Gamma(1 - \epsilon) \Gamma(i + 1 + \epsilon)}
\end{equation*}
and define $\lambda_i = 0$ and $\lambda_i = \lambda_{-i}$ if $i < 0$. They note that $\lambda P = \lambda$, and $\sum_{-\infty}^{\infty} \lambda_i = \infty$. Lastly, using a particular $\epsilon = \epsilon(k)$, they show that $Q = P \cdot P$ has ergodic index $k$.

\subsection{Reversible shifts}

\begin{prop}
	Let $T$ be a Markov shift defined by the matrix $P$ with $1$-eigenvalue $\lambda$. If $P$ is reversible, that is, if $P$ satisfies
	\begin{equation}
		\lambda_i p_{i,j} = \lambda_j p_{j,i}
	\end{equation}
	then $T$ is isomorphic to its inverse.
\end{prop}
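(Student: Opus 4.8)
The plan is to construct an explicit measure-theoretic isomorphism between $(X,\mathcal B,\mu_\lambda,T)$ and its inverse $(X,\mathcal B,\mu_\lambda,T^{-1})$ using the time-reversal map. First I would define $R\colon X\to X$ by $(Rx)_n = x_{-n}$ for all $n\in\Z$, the coordinate-reversal (flip) map on the sequence space $S^\Z$. The natural guess is that $R$ conjugates $T$ to $T^{-1}$, and indeed a direct computation on sequences shows $(R\circ T)(x)_n = (Tx)_{-n} = x_{-n+1} = (Rx)_{n-1} = (T^{-1}\circ R)(x)_n$, so $R\circ T = T^{-1}\circ R$, i.e. $R$ intertwines the two systems as topological maps. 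Since $R$ is clearly a bijection with $R=R^{-1}$ (it is an involution) and is bimeasurable on the cylinder $\sigma$-algebra, the only substantive thing left to verify is that $R$ preserves the measure $\mu_\lambda$.

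The key step, and the place where reversibility enters, is checking $\mu_\lambda\circ R^{-1} = \mu_\lambda$ on cylinder sets, which by a standard generation argument suffices on all of $\mathcal B$. I would compute the image under $R$ of a cylinder $[s_0\ldots s_n]_k$ and find that it is again a cylinder but with the block read backwards and re-anchored, so that its measure involves the product $\lambda_{s_n} p_{s_n,s_{n-1}} p_{s_{n-1},s_{n-2}}\cdots p_{s_1,s_0}$. The plan is then to show this equals the original measure $\lambda_{s_0} p_{s_0,s_1}\cdots p_{s_{n-1},s_n}$. This is precisely a telescoping consequence of the detailed-balance hypothesis $\lambda_i p_{i,j} = \lambda_j p_{j,i}$: applying it to each edge lets one convert each backward transition weight $p_{s_{j},s_{j-1}}$ into a forward weight while transferring the $\lambda$-factor one index over, and after telescoping the only surviving boundary term rearranges $\lambda_{s_n}$ back into $\lambda_{s_0}$. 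I would carry this out by induction on the block length $n$, with the base case $n=0$ being the trivial identity $\lambda_{s_0}=\lambda_{s_0}$ and the inductive step peeling off one transition using reversibility.

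The main obstacle, and the point deserving the most care, is bookkeeping of the index anchor: reversing a block shifts where the cylinder is based (the subscript $k$ changes), so I must confirm that $R([s_0\ldots s_n]_k)$ really is the cylinder $[s_n\ldots s_0]_{-k-n}$ and that shift-invariance of $\mu_\lambda$ under $T$ makes the anchor position irrelevant to the measure. Since $\mu_\lambda$ is already $T$-invariant, the measure of a cylinder depends only on the ordered block of symbols and not on its base index, so the anchor shift contributes nothing and one is reduced purely to the telescoping reversibility computation described above. Once $R$ is shown measure-preserving, bimeasurable, and invertible with $R T R^{-1} = T^{-1}$, the map $R$ is by definition an isomorphism of the measure-preserving system $T$ onto $T^{-1}$, completing the proof.
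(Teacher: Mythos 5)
Your proposal is correct and follows essentially the same route as the paper: both use the coordinate-reversal involution $\phi(x)_i = x_{-i}$, observe that it conjugates $T$ to $T^{-1}$, and verify measure preservation on cylinders by telescoping the detailed-balance identity $\lambda_i p_{i,j} = \lambda_j p_{j,i}$ across the reversed block. Your extra care about the cylinder anchor (and its irrelevance by shift-invariance) is handled implicitly in the paper by writing the reversed cylinder as $[s_n \ldots s_0]_l$ for an unspecified $l$, so there is no substantive difference.
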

\begin{proof}
	Define $\phi \colon X \to X$ by $\phi(x)_i = x_{-i}$. Clearly, $T \circ \phi = \phi \circ T^{-1}$. Now, $\phi^{-1}([s_0 \ldots s_n]_k) = \phi([s_0 \ldots s_n]_k) = [s_n \ldots s_0]_l$ where $l$ is some integer. Now,
	\begin{align*}
		\mu_{\lambda} [s_n \ldots s_0]_l &= \lambda_{s_n} p_{s_n, s_{n-1}} \ldots p_{s_1, s_0}\\
		&= p_{s_{n-1}, s_n} \lambda_{s_{n-1}} p_{s_{n-1},s_{n-2}} \ldots p_{s_1, s_0}\\
		&= p_{s_{n-2},s_{n-1}} p_{s_{n-1}, s_n} \lambda_{s_{n-2}} \ldots p_{s_1, s_0}\\
		&= \ldots = p_{s_0, s_1} \ldots p_{s_{n-1},s_n} \lambda_{s_0}\\
		&= \mu_{\lambda} [s_0 \ldots s_n]_k
	\end{align*}
	Thus $\phi$ is a measure isomorphism.
\end{proof}

\begin{prop}
	Let $P$ and $Q$ be reversible stochastic matrices defining Markov shifts, with the same $1$-eigenvector $\lambda$, and where $P$ and $Q$ commute. Then $P \cdot Q$ is reversible.
\end{prop}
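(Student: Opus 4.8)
The plan is to verify the detailed balance identity $\lambda_i (PQ)_{i,j} = \lambda_j (PQ)_{j,i}$ directly, by expanding the matrix product as a single sum over an intermediate index and pushing the factor $\lambda_i$ through the two reversibility relations one at a time. The only structural facts I will use are the two detailed balance equations $\lambda_i p_{i,k} = \lambda_k p_{k,i}$ and $\lambda_i q_{i,j} = \lambda_j q_{j,i}$, together with the commutation $PQ = QP$; notably the stationarity conditions $\lambda P = \lambda$ and $\lambda Q = \lambda$ are not needed separately, since detailed balance already forces $\lambda$ to be a left $1$-eigenvector of each factor.

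Concretely, I would write $(PQ)_{i,j} = \sum_k p_{i,k} q_{k,j}$ and attach the weight $\lambda_i$ to obtain $\lambda_i (PQ)_{i,j} = \sum_k (\lambda_i p_{i,k}) q_{k,j}$. The first step is to apply reversibility of $P$ to replace $\lambda_i p_{i,k}$ by $\lambda_k p_{k,i}$, giving $\sum_k p_{k,i} (\lambda_k q_{k,j})$. The second step is to apply reversibility of $Q$ to replace $\lambda_k q_{k,j}$ by $\lambda_j q_{j,k}$, so that the whole sum becomes $\lambda_j \sum_k q_{j,k} p_{k,i} = \lambda_j (QP)_{j,i}$. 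The final step is to invoke commutativity, $(QP)_{j,i} = (PQ)_{j,i}$, which yields $\lambda_i (PQ)_{i,j} = \lambda_j (PQ)_{j,i}$, exactly the reversibility condition for $R = PQ$. I would also note in passing that $PQ$ is again stochastic (a product of stochastic matrices with nonnegative entries and unit row sums), so it genuinely defines a Markov shift to which the statement applies.

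There is essentially no serious obstacle here: the computation is a short two-line manipulation, and the one point demanding care is the bookkeeping of \emph{which} detailed balance relation is applied to \emph{which} factor and in which order, since applying them in the wrong sequence produces $(PQ)_{j,i}$ versus $(QP)_{j,i}$ and it is precisely the commutation hypothesis that reconciles the two. A minor subtlety worth flagging is the interchange of the weight $\lambda_i$ with the infinite sum over the countable state space $S = \Z$; this is justified because all entries $p_{i,k}, q_{k,j}, \lambda_k$ are nonnegative, so every rearrangement is a manipulation of a series of nonnegative terms and Tonelli's theorem guarantees the equalities regardless of convergence issues.
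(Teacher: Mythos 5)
Your proof is correct and follows essentially the same route as the paper: expand $(PQ)_{i,j}$ as a sum over the intermediate index, apply detailed balance for $P$ and then for $Q$ to arrive at $\lambda_j (QP)_{j,i}$, and finish with the commutation hypothesis to identify this with $\lambda_j (PQ)_{j,i}$. Your added remarks (Tonelli for the nonnegative sums, stochasticity of $PQ$, and the observation that detailed balance already implies $\lambda P = \lambda$) are correct refinements of the same argument, not a different one.
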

\begin{proof}
	By assumption, $\lambda_i p_{i,j} = \lambda_j p_{j,i}$ and $\lambda_i q_{i,j} = \lambda_j q_{j,i}$ for every $i, j$. Now,
	\begin{align*}
		\lambda_i (pq)_{i,j} &= \lambda_i \sum_k p_{i,k}q_{k,j}\\
		&= \sum_k \lambda_i p_{i,k}q_{k,j}\\
		&= \sum_k \lambda_k p_{k,i}q_{k,j}\\
		&= \sum_k \lambda_j p_{k,i}q_{j,k}\\
		&= \lambda_j \sum_k p_{j,k}q_{k,i}\\
		&= \lambda_j (qp)_{j,i}\\
		&= \lambda_j (pq)_{j,i}
	\end{align*}
	so that $P \cdot Q$ is reversible.
\end{proof}

In specific, if $P$ is reversible, then $P \cdot P$ is reversible, because it has the same $1$-eigenvector.

\subsection{Main Construction}

\begin{prop}
	The stochastic matrix $P$ defined by Kakutani and Parry is reversible.
\end{prop}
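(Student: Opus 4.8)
The plan is to collapse the global identity $\lambda_i p_{i,j} = \lambda_j p_{j,i}$ to a single one-parameter family of checks and then verify that family directly from the Gamma-function recursion. First I would observe that the reversibility condition is symmetric under interchanging $i$ and $j$ and holds trivially whenever $p_{i,j} = p_{j,i} = 0$. Since the Kakutani--Parry matrix $P$ is a nearest-neighbor random walk, $p_{i,j}$ vanishes unless $j = i+1$ or $j = i-1$, so it suffices to establish
\[
	\lambda_i\, p_{i,i+1} = \lambda_{i+1}\, p_{i+1,i} \qquad \text{for every } i \in \Z,
\]
equivalently $\lambda_{i+1}/\lambda_i = p_{i,i+1}/p_{i+1,i}$ on the range where both quantities are defined.

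For the main range $i \ge 1$ I would compute the two sides of this ratio separately. On the transition side, $p_{i,i+1} = (1-\epsilon/i)/2$ and $p_{i+1,i} = (1 + \epsilon/(i+1))/2$, so that
\[
	\frac{p_{i,i+1}}{p_{i+1,i}} = \frac{(i+1)(i-\epsilon)}{\,i\,(i+1+\epsilon)}.
\]
On the eigenvector side, I would substitute the Gamma formula for $\lambda_i$ into $\lambda_{i+1}/\lambda_i$ and simplify using $\Gamma(x+1) = x\Gamma(x)$ in the forms $\Gamma(i+1-\epsilon) = (i-\epsilon)\Gamma(i-\epsilon)$ and $\Gamma(i+2+\epsilon) = (i+1+\epsilon)\Gamma(i+1+\epsilon)$. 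All Gamma factors cancel and leave $\lambda_{i+1}/\lambda_i = (i+1)(i-\epsilon)/[\,i\,(i+1+\epsilon)]$, which agrees with the transition ratio exactly.

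It then remains to treat the hinge at the origin and the negative indices. For $i = 0$ I would use $p_{0,1} = 1/2$, $p_{1,0} = (1+\epsilon)/2$, the value $\lambda_1 = 1/(1+\epsilon)$ (immediate from the formula, since the two $\Gamma(1-\epsilon)$ factors cancel and $\Gamma(2+\epsilon) = (1+\epsilon)\Gamma(1+\epsilon)$), and $\lambda_0 = 1$; then both sides of the $i=0$ identity equal $1/2$. For $i \le -1$ I would invoke the reflection symmetry $\lambda_{-i} = \lambda_i$ together with the matching symmetry $p_{-m,-m+1} = (1+\epsilon/m)/2$ and $p_{-m+1,-m} = (1-\epsilon/(m-1))/2$ of the chain, under which the identity for $i = -m$ becomes verbatim the positive-side identity already proved for $i = m-1$. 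The only genuinely delicate point is this bookkeeping at and around $i=0$: one must confirm $\lambda_0 = 1$ (which follows from $\lambda P = \lambda$ via $\lambda_{-1}p_{-1,0} + \lambda_1 p_{1,0} = \lambda_0$, noting $p_{-1,0} = p_{1,0} = (1+\epsilon)/2$) and check that the reflection interchanges upward and downward steps correctly, since $\lambda_i$ is prescribed only for $i>0$ and must be extended by symmetry. Everything else is a routine Gamma cancellation.
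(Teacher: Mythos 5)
Your proof is correct and follows essentially the same route as the paper's: reduce reversibility to the nearest-neighbor identities $\lambda_i p_{i,i+1} = \lambda_{i+1} p_{i+1,i}$, verify them for $i \ge 1$ by Gamma-function cancellation via $\Gamma(x+1) = x\Gamma(x)$, and check the hinge at $i=0$ by hand using $\lambda_0 = 1$ and $\lambda_1 = 1/(1+\epsilon)$. The only difference is in the bookkeeping for negative indices: where the paper handles $i < -1$ by ``a similar calculation,'' you reduce those cases verbatim to the already-proved positive-index identities via the reflection symmetry $\lambda_{-i} = \lambda_i$, $p_{-i,-j} = p_{i,j}$, which is a slightly tidier way to close the same argument.
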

\begin{proof}
	We wish to show that $\lambda_i/\lambda_j = p_{j,i}/p_{i,j}$. Now,
	\begin{equation*}
		\frac{p_{i,i+1}}{p_{i+1,i}} = \frac{p_{i,i+1}}{p_{i+1,(i+1)-1}} = \frac{1 - \epsilon/i}{1 + \epsilon/(i + 1)}
	\end{equation*}
	so long as $i, i + 1 \neq 0$. If $i = 0$, we have
	\begin{equation*}
		\frac{p_{i,i+1}}{p_{i+1,i}} = \frac{p_{0,1}}{p_{1,0}} = \frac{1}{1 + \epsilon}
	\end{equation*}
	and if $i = -1$, we have
	\begin{equation*}
		\frac{p_{i,i+1}}{p_{i+1,i}} = \frac{p_{-1,0}}{p_{0,-1}} = 1 + \epsilon
	\end{equation*}
	Recall that $\lambda$ is defined as
	\begin{equation*}
		\lambda_i = \frac{i \cdot \Gamma(1 + \epsilon) \Gamma(i - \epsilon)}{\Gamma(1 - \epsilon) \Gamma(i + 1 + \epsilon)}
	\end{equation*}
	if $i > 0$, $\lambda_0 = 1$, and $\lambda_i = \lambda_{-i}$ if $i < 0$. We need only check that the reversibility equality holds if $j = i + 1$ or $i - 1$, as the other entries in $P$ are all zero. If $i > 0$, we have
	\begin{align*}
		\frac{\lambda_{i + 1}}{\lambda_i} &= \frac{(i + 1) \cdot \Gamma(1 + \epsilon) \Gamma(i + 1 - \epsilon)}{\Gamma(1 - \epsilon) \Gamma(i + 2 + \epsilon)} \cdot \frac{\Gamma(1 - \epsilon) \Gamma(i + 1 + \epsilon)}{i \cdot \Gamma(1 + \epsilon) \Gamma(i - \epsilon)}\\
		&= \frac{i + 1}{i} \frac{i - \epsilon}{i + 1 + \epsilon}
	\end{align*}
	whereas
	\begin{align*}
		\frac{p_{i,i+1}}{p_{i+1,i}} = \frac{1 - \epsilon/i}{1 + \epsilon/(i + 1)} = \frac{i + 1}{i} \frac{i - \epsilon}{i + 1 + \epsilon}
	\end{align*}
	which is the same. The $i < -1$ case is a similar calculation. This concludes unless $i = 0, -1$. If $i = 0$, we have
	\begin{equation*}
		\frac{\lambda_1}{\lambda_0} = \frac{\Gamma(1 + \epsilon) \Gamma(1 - \epsilon)}{\Gamma(1 - \epsilon)\Gamma(2 + \epsilon)} = \frac{1}{1 + \epsilon} = \frac{p_{0,1}}{p_{1,0}}
	\end{equation*}
	and if $i = -1$, we get
	\begin{equation*}
		\frac{\lambda_0}{\lambda_{-1}} = \frac{\lambda_0}{\lambda_{1}} = (1 + \epsilon) = \frac{p_{-1,0}}{p_{0,-1}}
	\end{equation*}
	as required.
\end{proof}

\begin{cor}\label{C:tt-1notcons}
	For any $k$, there exists a conservative ergodic Markov shift $T$, isomorphic to its inverse, such that $T^{(k)}$ is conservative ergodic and $T^{(k)} \times T^{-1}$ is neither.
	\end{cor}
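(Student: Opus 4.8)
The plan is to take $T$ to be precisely the Kakutani--Parry shift defined by $Q = P\cdot P$ with the parameter $\epsilon = \epsilon(k)$ for which $Q$ has ergodic index exactly $k$, and then to exploit the reversibility of $Q$ together with the isomorphism-to-inverse proposition. The whole argument rests on the single observation that, because $Q$ is reversible, $T$ is isomorphic to $T^{-1}$, so that
\[
T^{(k)} \times T^{-1} \;\cong\; T^{(k)} \times T \;=\; T^{(k+1)},
\]
the isomorphism being $\mathrm{id}^{(k)} \times \phi$ with $\phi(x)_i = x_{-i}$ the map from the earlier proposition. Since isomorphisms preserve both conservativity and ergodicity, it suffices to analyze $T^{(k+1)}$.

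First I would record that $Q = P^2$ is reversible: $P$ is reversible by the proposition computing $\lambda_i/\lambda_j = p_{j,i}/p_{i,j}$ for the Kakutani--Parry matrix, and $P^2$ is reversible because it commutes with itself and shares the eigenvector $\lambda$, as noted after the commuting-product proposition. Hence the shift $T$ defined by $Q$ satisfies $T \cong T^{-1}$, giving the ``isomorphic to its inverse'' clause. The choice $\epsilon = \epsilon(k)$ guarantees, by Kakutani--Parry, that $T^{(j)}$ is ergodic for $1 \le j \le k$; in particular $T$ and $T^{(k)}$ are ergodic, and since an ergodic infinite measure-preserving transformation is automatically conservative, both $T$ and $T^{(k)}$ are conservative ergodic, exactly as the statement demands.

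It remains to show that $T^{(k+1)} \cong T^{(k)} \times T^{-1}$ is neither conservative nor ergodic. Here I would view $T^{(k+1)}$ as itself a $\sigma$-finite Markov shift, namely the shift on $(\mathbb Z^{k+1})^{\mathbb Z}$ with transition matrix $Q^{\otimes(k+1)}$ and stationary vector $\lambda^{\otimes(k+1)}$; the return probability of the diagonal state $\mathbf 0 = (0,\dots,0)$ is then
\[
\big(Q^{\otimes(k+1)}\big)^{(n)}_{\mathbf 0,\mathbf 0} = \big(q_{0,0}^{(n)}\big)^{k+1}.
\]
The Kakutani--Parry parameter $\epsilon(k)$ is calibrated exactly so that the return series $\sum_n \big(q_{0,0}^{(n)}\big)^{k+1}$ converges (the divergence of $\sum_n \big(q_{0,0}^{(n)}\big)^{k}$ being the counterpart consistent with $T^{(k)}$ conservative). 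A convergent return series makes $\mathbf 0$ transient, so the cylinder $[\mathbf 0]_0$ of positive measure is dissipative and $T^{(k+1)}$ fails the conservativity criterion of the Markov-shift theorem quoted above. Thus $T^{(k+1)}$ is not conservative, and since every ergodic infinite measure-preserving transformation is conservative, it is a fortiori not ergodic; transporting this back through the isomorphism finishes the proof.

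The main obstacle is precisely this last step: one must know not merely that $T^{(k+1)}$ fails to be ergodic (which is the defining property of ergodic index $k$) but that it actually fails to be conservative. This is supplied by the polynomial decay asymptotics of the Kakutani--Parry return probabilities $q_{0,0}^{(n)}$, tuned by $\epsilon(k)$ so that raising to the power $k+1$ pushes the return series below the summability threshold; the non-conservativity then follows from the converse half of the irreducible-Markov-shift theorem, which needs only transience of a single state rather than irreducibility of the full product.
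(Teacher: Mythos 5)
Your proposal is correct and follows essentially the same route as the paper: establish reversibility of $Q = P\cdot P$ (via reversibility of $P$ and the commuting-product proposition), conclude $T \cong T^{-1}$ through $\phi(x)_i = x_{-i}$, identify $T^{(k)}\times T^{-1}$ with $T^{(k+1)}$, and invoke the Kakutani--Parry choice of $\epsilon(k)$. The one place you genuinely improve on the paper's write-up is the final step: the paper asserts that $T^{(k+1)}$ is ``not ergodic, hence not conservative,'' which as written is the reverse of the valid implication in infinite measure (ergodic $\Rightarrow$ conservative, so non-conservativity is the stronger claim); the implicit justification there is that the product is again a Markov shift for which conservativity plus irreducibility would force ergodicity. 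You instead argue in the logically clean direction: the convergent return series $\sum_n \bigl(q_{0,0}^{(n)}\bigr)^{k+1}$ makes the diagonal state transient, so $T^{(k+1)}$ fails conservativity outright --- and, as you correctly note, this needs only transience of a single state of positive finite measure, not irreducibility of the full product, which is a relevant point since $Q = P^2$ preserves parity and the product chain need not be irreducible on all of $\Z^{k+1}$ --- after which non-ergodicity follows. Both arguments rest on the same Kakutani--Parry asymptotics; yours just makes explicit the step the paper leaves compressed.
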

\begin{proof}
	Kakutani and Parry show that by suitable choice of $\epsilon$, the Markov shift $T$ defined by $P \cdot P$ is such that $T^{(k)}$ is conservative ergodic but $T^{(k + 1)}$ is not ergodic, hence not conservative. By the above, $T$ is isomorphic to its inverse, so clearly $T^{(k)} \times T^{-1}$ is not conservative (and hence not ergodic). 
\end{proof}
In particular, choosing $k=1$, this gives us a transformation $T$ such that $T$ is conservative ergodic, but $T \times T^{-1}$ is neither.


\subsection{Power Weak Mixing is Generic}

 An invertible transformation $T$ is said to be {\bf power weakly mixing} if for every sequence of numbers $k_1, \ldots, k_r \in \Z \setminus \{0\}$, the product transformation $T^{k_1} \times \ldots \times T^{k_r}$ is ergodic. In finite measure this is equivalent to weak mixing, but in infinite measure it is stronger than infinite ergodic index \cite{AdFrSi01}. As we will show in this section, under the weak topology in the group of invertible measure-preserving transformations, the set of transformations that are power weak mixing is a residual set, so we say this property is {\bf generic}. It follows that the set of transformations $T$ such that $T\times T^{-1}$ is not ergodic is meagre.   Sachdeva \cite{Sa71}  showed that infinite ergodic index is generic in the weak topology. Ageev, at the time of \cite{AgSi02} mentioned to one of the authors that he had a proof that power weak mixing is generic, but it has not  been published as far as we know. Following the proof of genericity of infinite ergodic index in \cite{DaSi09} we include below a proof of genericity of power weak mixing, as we are interested in showing that the properties of the transformations of Section~\ref{S:Tinv} are topologically rare.

We recall the weak topology defined on the group  $\mathcal{G} = \mathcal{G}(X, \mu)$  of invertible measure-preserving transformations on a $\sigma$-finite Lebesgue measure space $(X, \mathcal{B}, \mu)$.   The topology on $\mathcal{G}$ is inherited from  the strong operator topology so that a sequence  $T_n$ converges to $T$ if and only if \[\mu \left( T_n(A) \bigtriangleup T(A) \right) + \mu \left( T_n^{-1}(A) \bigtriangleup T^{-1}(A) \right) \to 0,\]
for all sets of finite measure $A$.  This topology is called the {\bf weak topology} on $\mathcal{G}$, and  is completely metrizable through a natural metric \cite{Sa71}.

We will use  the following lemma from \cite{Sa71}.

\begin{lemma} \label{conjugates_dense}
	The conjugacy class of any transformation $T \in \mathcal{G}(X, \mu)$ is dense in $\mathcal{G}(X, \mu)$.
\end{lemma}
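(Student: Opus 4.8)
The plan is to run the classical conjugacy argument of Halmos, adapted to the $\sigma$-finite setting. Recall that a basic weak neighborhood of a transformation $R \in \mathcal{G}(X,\mu)$ is determined by finitely many sets $A_1,\dots,A_m$ of finite measure and an $\epsilon > 0$, consisting of those $S'$ with $\mu(S'A_i \bigtriangleup R A_i) + \mu((S')^{-1}A_i \bigtriangleup R^{-1}A_i) < \epsilon$ for all $i$. Since $X$ is a non-atomic Lebesgue space, I would first refine the $A_i$ into a single finite partition $\mathcal{Q}$ of a set $Y_0 \supseteq \bigcup_i A_i$ of finite measure into atoms of equal measure; controlling a transformation on the atoms of $\mathcal{Q}$ controls it on each $A_i$, so it suffices to approximate $R$ on $\mathcal{Q}$. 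The goal is then, for an arbitrary such $R$, to produce a measure isomorphism $S$ with $S^{-1} T S$ inside the given neighborhood.

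Next I would realize both $T$ and $R$ as cyclic permutations of Rokhlin towers of a common shape. Since antiperiodic transformations are weakly dense in $\mathcal{G}(X,\mu)$, it is enough to approximate antiperiodic targets $R$, to which Rokhlin's lemma applies. Applying Rokhlin's lemma to $R$ produces a tower with levels $L_0,\dots,L_{n-1}$ satisfying $R L_j \approx L_{j+1}$, of height $n$ so large and base $B=L_0$ so chosen that $Y_0$ and the atoms of $\mathcal{Q}$ are, up to measure $\epsilon$, unions of the levels $L_j$. Applying Rokhlin's lemma to $T$ (antiperiodic, which is the case of interest) I would extract a tower with levels $L_0',\dots,L_{n-1}'$ of the \emph{same} height $n$ and with base measure $\mu(B') = \mu(B)$. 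I then define a measure-preserving bijection $S$ of $X$ by sending $L_j$ onto $L_j'$ for each $j$ (all levels of a tower have equal measure, so this is possible once the heights and base measures agree) and extending $S$ over the complements of the two towers by any measure isomorphism, which exists because both complements are non-atomic $\sigma$-finite Lebesgue spaces of equal infinite measure. For $x \in L_j$ with $j < n-1$ one computes $S^{-1} T S x \in L_{j+1}$, so $S^{-1} T S$ coincides with the cyclic tower permutation, hence with $R$, off the top level $L_{n-1}$ and off the tower. Because the atoms of $\mathcal{Q}$ are unions of levels up to $\epsilon$ and the top level carries a $1/n$ fraction of the tower, taking $n$ large makes $S^{-1}TS$ agree with $R$ on $\mathcal{Q}$ up to $O(\epsilon)$, and the same tower structure controls the inverse term since the tower map and its inverse shift along the same levels. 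Thus $S^{-1} T S$ lies in the neighborhood, and the conjugacy class of $T$ is dense.

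The main obstacle is the infinite-measure bookkeeping rather than the combinatorial skeleton, which is exactly Halmos's finite-measure argument. Concretely, one must run Rokhlin's lemma in the $\sigma$-finite setting so that a single tower of prescribed height simultaneously covers the finitely many finite-measure test sets up to $\epsilon$, and one must match the base measures of the $T$-tower and the $R$-tower \emph{exactly} so that the level-by-level map $S$ is genuinely measure preserving; this matching is arranged by tuning the base and height in Rokhlin's lemma. Extending $S$ across the infinite-measure complements is harmless because any two non-atomic $\sigma$-finite Lebesgue spaces of equal total measure are isomorphic, and this freedom is precisely why no control is needed off the towers. Finally, one checks that both terms defining the weak topology, the one involving $R$ and the one involving $R^{-1}$, are controlled simultaneously; this is immediate from the tower picture, since conjugating the tower map also controls its inverse on the same levels. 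These adaptations are the content of Sachdeva's treatment \cite{Sa71}, and together they yield the lemma.
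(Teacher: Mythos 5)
First, a point of reference: the paper itself gives no proof of this lemma (it is quoted from \cite{Sa71}), so your proposal has to be measured against the standard Halmos--Sachdeva argument. Your skeleton is right in spirit, and your observation that an ``incoherent'' level-to-level matching suffices is correct, because the weak topology only sees images of sets: once $S$ carries $L_j$ onto $L_j'$, the conjugate $S^{-1}TS$ carries $L_j$ \emph{onto} $L_{j+1}=R(L_j)$, which is all that the symmetric-difference estimates require. The genuine gap is in your error estimate for the uncontrolled levels. You write that ``the top level carries a $1/n$ fraction of the tower, so taking $n$ large makes $S^{-1}TS$ agree with $R$ on $\mathcal{Q}$ up to $O(\epsilon)$.'' That is the finite-measure heuristic: when $\mu(X)<\infty$, a height-$n$ Rokhlin tower covering $X$ up to $\epsilon$ automatically has base (hence top level) of measure at most $\mu(X)/n$. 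In infinite measure this fails. The Rokhlin lemma for a conservative aperiodic $R$ produces a height-$n$ tower covering the finite-measure set $Y_0$ up to $\epsilon$, but gives \emph{no upper bound} on the base measure $\mu(B)$: the tower may protrude far outside $Y_0$ (indeed may have infinite measure), and even after discarding the columns that miss $Y_0$ one only gets $\mu(B)\le\mu(Y_0)$. Since every level has measure $\mu(B)$, the top level (for the forward term) and the base (for the inverse term) each contribute errors of size up to $2\mu(B\cap A_i)$, which can be comparable to $\mu(Y_0)$ no matter how large $n$ is. What your argument actually needs is a tower for $R$ that simultaneously covers $Y_0$ up to $\epsilon$ \emph{and} has base measure below $\epsilon$ --- a ``tall and thin,'' essentially efficient, tower. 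That is a real strengthening of the infinite-measure Rokhlin lemma, and it is not obtained by ``tuning the base and height''; it is where the actual work of the lemma lives.

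The standard proof sidesteps this by putting the covering burden somewhere that costs nothing. Following Halmos, one first approximates $R$ weakly by a transformation $\sigma$ that cyclically permutes finitely many pairwise disjoint sets $I_0,\dots,I_{k-1}$ of equal measure $\delta$, chosen so small that the $I_j$ refine the test sets, with $\sigma$ the identity off their (finite-measure) union; this approximation step needs no aperiodicity of $R$ and no tower for $R$ at all. One then realizes $\sigma$, up to error $2\delta$, as $S^{-1}TS$ using a Rokhlin tower for $T$ of height $k$ and base measure \emph{exactly} $\delta$: here the asymmetry pays off, because the $T$-tower has no covering obligation, so one may take any $T$-tower of height $k$ whose base has measure at least $\delta$ (available since $T$ is aperiodic and $\mu(X)=\infty$) and shrink its base. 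Your level-matching mechanism then goes through verbatim, with the uncontrolled-level error now $\le 2\delta$ by construction. Two smaller remarks: (i) as literally stated the lemma is false for arbitrary $T$ (the conjugacy class of the identity is a single point), so the aperiodicity you invoke parenthetically is not cosmetic --- it is a necessary hypothesis, satisfied by the power weakly mixing $T$ to which the paper applies the lemma; (ii) your reduction via ``antiperiodic transformations are weakly dense'' is a theorem of the same depth in infinite measure, so quoting it as a black box shifts rather than discharges the burden --- the permutation route above makes that reduction unnecessary.
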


\begin{theorem}
	The property of power weak mixing is generic in $\mathcal{G}(X, \mu)$, in particular, the set of power weakly mixing transformation in $\mathcal{G}(X, \mu)$ forms a dense $G_{\delta}$ subset.
\end{theorem}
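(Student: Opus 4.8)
The plan is to show that the set $\mathcal{W}$ of power weakly mixing transformations is both dense and $G_\delta$ in $\mathcal{G}(X,\mu)$; since this group is completely metrizable \cite{Sa71} it is a Baire space, so a dense $G_\delta$ set is residual, which is precisely the asserted genericity. The first observation is that power weak mixing is a countable condition: writing $\mathbf{k}=(k_1,\dots,k_r)$ for a finite tuple of nonzero integers and $S_{\mathbf{k}}=T^{k_1}\times\cdots\times T^{k_r}$, we have
\[
\mathcal{W}=\bigcap_{r\ge 1}\ \bigcap_{\mathbf{k}\in(\Z\setminus\{0\})^r}\ \{T: S_{\mathbf{k}} \text{ is ergodic}\},
\]
an intersection over a countable index set. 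Hence it suffices to prove that for each fixed $\mathbf{k}$ the set $E(\mathbf{k})=\{T: S_{\mathbf{k}}\text{ ergodic}\}$ is $G_\delta$, and separately that $\mathcal{W}$ is dense.

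For the $G_\delta$ part I would first record two continuity facts, both proved by telescoping differences of the form $\mu(T_nA\triangle TA)$ and passing from rectangles to general finite-measure sets by approximation: the map $T\mapsto T^{k}$ is weakly continuous for each fixed $k\in\Z$, and consequently $T\mapsto S_{\mathbf{k}}$ is a weakly continuous map from $\mathcal{G}(X,\mu)$ into $\mathcal{G}(X^r,\mu^{\otimes r})$. Next I would fix a countable family $\{A_i\}$ of positive-finite-measure sets dense in the finite part of the measure algebra of $X^r$, and characterize ergodicity of $S_{\mathbf{k}}$ by a coverage condition: $S_{\mathbf{k}}$ is ergodic if and only if for all $i,j$ and all $p\in\N$ there is $N$ with $\mu(A_j\cap\bigcup_{n=1}^N S_{\mathbf{k}}^{-n}A_i)>\mu(A_j)-\tfrac1p$. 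Because $X^r$ is nonatomic, an ergodic $S_{\mathbf{k}}$ is automatically conservative (a totally dissipative transformation is a translation on $\Z\times W$ and so has nontrivial invariant orbit-unions, hence cannot be ergodic on a nonatomic space); conservativity then forces $\bigcup_{n\ge1}S_{\mathbf{k}}^{-n}A_i$ to fill $X^r$ modulo $\mu$, giving full coverage and thus finite-$N$ approximate coverage for every $p$. Conversely, approximate coverage for all $p$ yields $\mu(A_j\cap\bigcup_{n\ge1}S_{\mathbf{k}}^{-n}A_i)=\mu(A_j)$ for all pairs, which by density extends to all finite-measure sets and is precisely conservative ergodicity. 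Using the continuity facts, each set $\{T:\mu(A_j\cap\bigcup_{n=1}^N S_{\mathbf{k}}^{-n}A_i)>\mu(A_j)-\tfrac1p\}$ is open, so
\[
E(\mathbf{k})=\bigcap_{i,j,p}\ \bigcup_{N\ge1}\Big\{T:\mu\Big(A_j\cap\bigcup_{n=1}^N S_{\mathbf{k}}^{-n}A_i\Big)>\mu(A_j)-\tfrac1p\Big\}
\]
is $G_\delta$, and therefore $\mathcal{W}=\bigcap_{\mathbf{k}}E(\mathbf{k})$ is $G_\delta$.

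For density I would invoke the existence of at least one power weakly mixing transformation $T_0$, constructed in \cite{AdFrSi01}. Power weak mixing is a conjugacy invariant: for any $\phi\in\mathcal{G}(X,\mu)$ one has $(\phi T_0\phi^{-1})^{k_\ell}=\phi T_0^{k_\ell}\phi^{-1}$, so $S_{\mathbf{k}}(\phi T_0\phi^{-1})$ is conjugate to $S_{\mathbf{k}}(T_0)$ by the measure isomorphism $\phi\times\cdots\times\phi$ and is ergodic whenever $S_{\mathbf{k}}(T_0)$ is. Thus the entire conjugacy class of $T_0$ lies in $\mathcal{W}$, and by Lemma \ref{conjugates_dense} this class is dense in $\mathcal{G}(X,\mu)$; hence $\mathcal{W}$ is dense. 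Combined with the previous paragraph, $\mathcal{W}$ is a dense $G_\delta$, which proves the theorem.

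The step I expect to be the main obstacle is the ergodicity characterization: one must be sure that a single countable family of approximate-coverage conditions really captures ergodicity of $S_{\mathbf{k}}$ on the infinite-measure product space. The delicate points are verifying that ergodicity forces conservativity on the nonatomic space $X^r$ (so that orbits genuinely fill the space and finite-$N$ coverage is attainable), and checking that approximating arbitrary invariant sets by members of the countable family does not destroy the coverage estimate. The two continuity lemmas, while routine, also need to be stated carefully enough that openness of each condition in the weak topology is transparent.
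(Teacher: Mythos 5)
Your overall architecture coincides with the paper's: write the set of power weakly mixing transformations as a countable intersection over tuples $\mathbf{k}$ of the sets $E(\mathbf{k})=\{T: T^{k_1}\times\cdots\times T^{k_r}\ \text{ergodic}\}$, show each $E(\mathbf{k})$ is $G_\delta$ using weak continuity of $T\mapsto T^{k_1}\times\cdots\times T^{k_r}$, and obtain density from nonemptiness together with conjugation-invariance and density of conjugacy classes (Lemma \ref{conjugates_dense}). The density half of your argument is correct and essentially identical to the paper's. The genuine gap is in the $G_\delta$ half, at exactly the step you flag as delicate. The paper does not reprove that the ergodic transformations form a $G_\delta$ subset of $\mathcal{G}\left(X^{(r)},\mu^{(r)}\right)$; it cites Sachdeva \cite{Sa71} (see also \cite{Aa97}) for this and pulls it back under the continuous map $\phi_{\mathbf{k}}$. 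Your inline replacement for that citation is not correct as written: the implication ``approximate coverage for all pairs from the countable dense family $\Rightarrow$ ergodicity'' breaks down precisely at the phrase ``by density extends to all finite-measure sets.'' The map $B\mapsto\bigcup_{n\ge1}S^{-n}B$ is badly discontinuous in the symmetric-difference metric: perturbing $A_i$ by a set of arbitrarily small measure can change the union $\bigcup_{n\ge1}S^{-n}A_i$ by a set of infinite measure, so coverage statements do not pass from the fixed family $\{A_i\}$ to arbitrary finite-measure sets. Concretely, suppose $S$ preserves a set $A$ with $\mu(A)>0$ and $\mu(A^c)>0$, and the restrictions of $S$ to $A$ and to $A^c$ are each conservative ergodic. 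If every $A_i$ in your fixed family meets both $A$ and $A^c$ in positive measure (and for any fixed countable family such a splitting set $A$ can be constructed, since a countable collection of positive-measure sets in a nonatomic Lebesgue space admits a common splitter), then $\bigcup_{n\ge1}S^{-n}A_i$ contains $A\cup A^c$, i.e.\ all of $X$ modulo $\mu$, for every $i$, so your coverage condition holds for all $i,j,p$ even though $S$ is not ergodic. Thus the displayed set is $G_\delta$ but need not equal $\{T: S_{\mathbf{k}}\ \text{ergodic}\}$; at minimum the equivalence would have to be argued for transformations of the special product form $S_{\mathbf{k}}(T)$, and you give no such argument. (Your forward direction, ergodic $\Rightarrow$ conservative $\Rightarrow$ coverage, is fine.)

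There is also a citation error in the density half: for nonemptiness you invoke \cite{AdFrSi01}, but the transformation constructed there has infinite ergodic index while $T\times T^2$ is not conservative, so it is explicitly \emph{not} power weakly mixing; the reference the paper uses for the existence of a power weakly mixing transformation is \cite{DGMS99}. This is easily fixed, and the rest of your density argument (the identity $(\phi T_0\phi^{-1})^{k}=\phi T_0^{k}\phi^{-1}$, so that each product $S_{\mathbf{k}}(\phi T_0\phi^{-1})$ is conjugate to $S_{\mathbf{k}}(T_0)$ and hence ergodic, so the whole dense conjugacy class of $T_0$ is power weakly mixing) is sound and matches the paper's computation.
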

\begin{proof}
	Let $P_{\infty}$ be the set of power weakly mixing transformations on $(X, \mu)$. First we show that it is a $G_{\delta}$ set. Let $\alpha = (\alpha_1, \ldots, \alpha_k)$, where $\alpha_i \in \Z \setminus \{0\}$ for each $1\le i\le k$. For  an invertible measure-preserving transformation $T$, define $T^\alpha = T^{\alpha_1} \times \ldots \times T^{\alpha_k}$. That $T$ is power weakly mixing is equivalent to $T^\alpha$ being ergodic for every such $\alpha$. Now, define $\phi_{\alpha} \colon \mathcal{G}(X, \mu) \to \mathcal{G} \left( X^{(k)}, \mu^{(k)} \right)$ by $\phi_{\alpha}(T) = T^\alpha$. As is easily checked, $\phi_{\alpha}$ is continuous in the weak topology. By Sachdeva \cite{Sa71} (see also \cite{Aa97}), the ergodic transformations $\mathcal{E}^{(k)}$ form a $G_{\delta}$ subset of $\mathcal{G}\left(X^{(k)}, \mu^{(k)} \right)$, hence $\phi_{\alpha}^{-1}\left( \mathcal{E}^{(k)} \right)$ is a $G_{\delta}$ subset of $\mathcal{G}(X, \mu)$. But $\phi_{\alpha}^{-1}\left( \mathcal{E}^{(k)} \right)$ is precisely those $T \in \mathcal{G}(X, \mu)$ such that $T^\alpha$ is ergodic, hence $P_{\alpha}$, the set of $T$ such that $T^\alpha$ is ergodic, is $G_{\delta}$. Because the countable intersection of $G_{\delta}$ sets is $G_{\delta}$, $P_{\infty}$ is $G_{\delta}$ in $\mathcal{G}(X, \mu)$.\\

	It remains to show density. Since $P_{\infty}$ is nonempty \cite{DGMS99},  if we show that it is closed under conjugation, Lemma~\ref{conjugates_dense} will give us that it is dense. To that end, let $\alpha = (\alpha_1, \ldots, \alpha_k)$ be a tuple of nonzero integers, let $S$ be a measure-preserving transformation, and suppose that $(S \circ T \circ S^{-1})^\alpha (A) = A$ for some $A$. This means
	\begin{gather*}
		(S \circ T \circ S^{-1})^\alpha (A) = A\\
		\left( (S \circ T \circ S^{-1})^{\alpha_1} \times \ldots \times (S \circ T \circ S^{-1})^{\alpha_k} \right) (A) = A\\
		\left( (S \circ T^{\alpha_1} \circ S^{-1}) \times \ldots \times (S \circ T^{\alpha_k} \circ S^{-1}) \right) (A) = A\\
		S^{(k)} \circ T^\alpha \circ (S^{-1})^{(k)} A = A\\
		T^\alpha \circ (S^{-1})^{(k)} A = (S^{-1})^{(k)} A\\
	\end{gather*}
	hence by the ergodicity of $T^\alpha$ we have $(S^{-1})^{(k)} A$ is either null or conull, hence as $S$ is measure-preserving $A$ is either null or conull, hence $S \circ T \circ S^{-1}$ is power weakly mixing.
\end{proof}

\bibliography{ErgBib_Master}
\bibliographystyle{plain}

\end{document}